\providecommand{\U}[1]{\protect\rule{.1in}{.1in}}
\numberwithin{equation}{section}
\newtheorem{theorem}{Theorem}[section]
\newtheorem{lemma}[theorem]{Lemma}
\newtheorem{proposition}[theorem]{Proposition}
\newtheorem{remark}[theorem]{Remark}
\newtheorem{definition}[theorem]{Definition}
\def\<{\langle}
\def\>{\rangle}
\def\d{{\rm d}}
\def\div{{\rm div}}
\def\E{\mathbb{E}}
\def\N{\mathbb{N}}
\def\P{\mathbb{P}}
\def\R{\mathbb{R}}
\def\T{\mathbb{T}}
\def\Z{\mathbb{Z}}
\def\F{\mathcal{F}}
\newcommand{\assign}{:=}
\begin{document}

\title{Dissipation enhancement by transport noise for\\ stochastic $p$-Laplace equations}

\author{Zhao Dong\quad Dejun Luo\quad Bin Tang \bigskip \\
{\footnotesize Key Laboratory of RCSDS, Academy of Mathematics and Systems Science,}\\
{\footnotesize Chinese Academy of Sciences, Beijing 100190, China} \\
{\footnotesize School of Mathematical Sciences, University of Chinese Academy of Sciences, Beijing 100049, China}}

\maketitle

\vspace{-20pt}

\begin{abstract}
The stochastic $p$-Laplace equation with multiplicative transport noise is studied on the torus $\T^d\, (d\geq 2)$. It is shown that the dissipation is enhanced by transport noise in both the averaged sense and the pathwise sense.
\end{abstract}

\textbf{Keywords:} $p$-Laplace, transport noise, dissipation enhancement, semigroup approach

\textbf{MSC (2020):} 60H15, 60H50

\section{Introduction}

We are concerned with the stochastic $p$-Laplace equation on the torus  $\T^d= \R^d/ \Z^d\, (d\geq 2)$, perturbed by multiplicative noise of transport type:
  \begin{equation}\label{stoch-p-laplace}
  d u = \Delta_p u \,d t + \nabla u \circ d W, \quad u(0,\cdot) = u_0,
  \end{equation}
where $\Delta_p u= \div(|\nabla u|^{p-2} \nabla u)$ with $p>2$, $\circ\, dW$ means the stochastic differential is understood in the Stratonovich sense and $W= W(t,x)$ is a space-time noise, white in time and coloured in space, modelling some background random perturbation. We shall also assume that $W(t,x)$ is divergence free in the space variable; see Section \ref{sec: noise setting} for its precise form. At least formally, one has the energy balance: $\P$-a.s. for all $t\geq 0$,
\begin{equation}\label{energy identity}
    \|u(t)\|_{L^2}^2 + 2 \int_0^t \|\nabla u(s)\|_{L^p}^p\,d s  = \|u_0 \|_{L^2}^2;
\end{equation}
similarly as in the non-perturbed case, this immediately implies decay of solutions:
  \begin{equation}\label{deterministic-dissip}
  \|u(t) \|_{L^2} \leq  \frac{\|u_0 \|_{L^2} }{\big(1+ (p-2)\lambda_1^{p/2} t \|u_0 \|_{L^2}^{p-2} \big)^{1/(p-2)}},
  \end{equation}
where $\lambda_1$ is the spectral gap of $\T^d$.
Inspired by the recent work \cite{FHX21} in the deterministic setting, we will show that suitably chosen random noise  can greatly enhance the rate of decay, both in averaged sense and in the pathwise sense. Before stating the precise results, let us briefly recall some literature related to the phenomena of dissipation enhancement.

It has been known for a long time, in the physics and engineering communities, that certain perturbations speed up the mixing of fluids. In the mathematical literature, early influential studies of such phenomena date back to the works \cite{BHN05, CKRZ08}. In particular, for a bounded divergence free vector field $v$ on a smooth bounded domain $D\subset \R^N$, the authors of \cite{BHN05} studied the eigenvalue problem for the operators $-\Delta + A v \cdot \nabla\ (A>0)$ with Dirichlet boundary condition; it was shown that the principal eigenvalue $\lambda_A$ remains bounded as $A\to +\infty$ if and only if $v$ admits a first integral $w\in H^1_0(D)$, namely, $v\cdot \nabla w=0$. Later on, Constantin et al. \cite{CKRZ08} extended such ideas to the setting of compact manifolds $M$, and proved that an incompressible flow $v$ on $M$ fulfils the relaxation-enhancing property if and only if $v\cdot \nabla$ has no nontrivial eigenfunction in $H^1(M)$. Recall that an incompressible flow $v$ on $M$ is called relaxation enhancing if  the solution $\phi^A$ to the advection-diffusion equation
  $$\partial_t \phi - \Delta \phi + A v\cdot \nabla \phi =0 ,\quad \phi(0,\cdot) =\phi_0 \in L^2(M)  $$
satisfies, for any $t>0$,
  $$\lim_{A\to \infty} \|\phi^A(t,\cdot)- \bar\phi_0 \|_{L^2(M)}= 0, $$
where $\bar\phi_0 = \frac1{|M|}\int_M \phi_0 \,\d x$ is the average of $\phi_0$, a quantity preserved by the above equation.

Motivated by the pioneering works \cite{BHN05, CKRZ08}, the phenomena of dissipation enhancement have been studied intensively in the past years, see for instance \cite{Zla10, BCZ17, FI19, CZDE20, BBPS21, GY21, IXZ21} among many others. In particular, Feng and Iyer \cite{FI19} gave some explicit estimates on the dissipation time of $v$ in terms of the so-called mixing rate; the former means the minimal time needed for the solutions of advection-diffusion equations to dissipate a fraction (e.g. $1/2$) of their $L^2$-norms. Later on, Iyer et al. \cite{IXZ21} have used the dissipation time to formulate conditions on incompressible flows so that the blow-up in certain nonlinear systems is suppressed by the perturbation of such flows. In the remarkable work \cite{BBPS21}, Bedrossian et al. have shown that the incompressible flows $v$ can be chosen as the sample paths of certain stationary solutions of stochastic 2D Navier-Stokes equations, showing the generality of flows with dissipation-enhancing properties. Their arguments are quite technical, relying on the quantitative Harris theorem from ergodic theory and delicate studies of the underlying projective process. More recently, Gess and Yaroslavtsev \cite{GY21} obtained some related results for the famous Kraichnan model from turbulence theory.

On the other hand, there is a series of studies on stochastic fluid dynamical equations with transport noise; the latter might be regarded as random analogues of the vector field $v$ appearing above. The vorticity form of stochastic 2D Euler equation driven by transport noise has been studied for various types of initial data, see e.g. \cite{BFM16, BM19, FL19, FL20, LC22}. In particular,  it was shown in \cite{FL20} that white noise solutions of a sequence of stochastic 2D Euler equations converge weakly to the unique stationary solution of the 2D Navier-Stokes equation driven by space-time white noise. A remarkable fact is that the approximating equations are formally conservative, while the limit equation is dissipative. Partly inspired by this work, Galeati \cite{Galeati20} established a scaling limit result for $L^2$-solutions of stochastic linear transport equations on $\T^d$ with transport noise:
  $$d u + b\cdot \nabla u\,d t + \sqrt{\nu}\, \nabla u\circ d W=0, $$
where $b$ is a time-dependent vector field with appropriate regularity and $\nu$ is the intensity of noise. Rescaling the noise in a suitable way, he showed that the solutions converge to the unique solution of the deterministic parabolic equation
  $$\partial_t u + b\cdot \nabla u = \nu \Delta u.$$
Loosely speaking, small scale transport noise produces in the limit an extra dissipative term, which can be called eddy dissipation. Such scaling limit result was later on extended in \cite{FGL21a, Luo21} to some stochastic fluid equations with transport noise, see \cite{FGL21c} for quantitative convergence rates. Note that the bigger the noise intensity, the stronger the dissipation term in the limit equation; we have made use of this fact to show that transport noise suppresses possible explosion of solutions to some deterministic equations, cf. \cite{FL21, FGL21b}.

Moreover, using mild formulations of both approximating stochastic equations and the limit equation, we have shown in \cite{FGL21c} dissipation enhancement for stochastic heat equations with transport noise, i.e. \eqref{stoch-p-laplace} with $p=2$. In this paper, we aim to extend such result to the case $p>2$ corresponding to nonlinear diffusions. The same problem has already been studied in the deterministic setting by Feng, Hu and Xu in \cite{FHX21}, to which we also refer for discussions on the motivation of studying $p$-Laplace evolution systems. Following some ideas in \cite{CZDE20, FI19},  they introduced a nonlinear version of dissipation time $\kappa_d$ (see \cite[Definition 2.3]{FHX21}), and estimated the latter via the mixing rate of the time-dependent incompressible flow $v$, see \cite[Theorems 2.5 and 2.7]{FHX21} for general estimates and Corollaries 2.6 and 2.8 therein for more explicit results when $v$ is assumed to be strongly/weakly mixing.

Our first  result gives enhanced dissipation in the average sense.
\begin{theorem}\label{thm-average}
Let $p\in (2, 2d/(d-2))$ and $R>0$ be given. For any $\mu>0$, there exists a space-time noise $W(t,x)$ such that for any $\| u_0\|_{L^2}\leq R $, the solution $\{u(t) \}_{t>0}$ to \eqref{stoch-p-laplace} with initial condition $u_0$ satisfies
\begin{equation}\label{thm-average.eq}
    \|u(t) \|_{L^2(\Omega,L^2)} \leq \frac{\|u_0 \|_{L^2} }{\big(1+ (p-2)\mu t \|u_0 \|_{L^2}^{p-2} \big)^{1/(p-2)}}, \quad \mbox{for any } t\geq 1.
\end{equation}
\end{theorem}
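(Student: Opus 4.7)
The plan is to exploit the It\^o correction produced by the Stratonovich transport noise. For $W$ a divergence-free noise with translation-invariant covariance $Q(x-y)=\sum_k\sigma_k(x)\otimes\sigma_k(y)$ satisfying $Q(0)=\nu\, I$ for a large parameter $\nu$ (tuned in terms of $\mu$ and $R$), the It\^o form of \eqref{stoch-p-laplace} is
\begin{equation*}
du = \Bigl(\Delta_p u + \tfrac{\nu}{2}\Delta u\Bigr)\,dt + \nabla u\cdot dW.
\end{equation*}
An application of It\^o's formula to $\|u\|_{L^2}^2$ produces a cancellation between the extra drift $\tfrac{\nu}{2}\Delta u$ and the martingale quadratic variation, so that the pathwise energy identity \eqref{energy identity} still holds and $f(t):=\E\|u(t)\|_{L^2}^2$ satisfies $f'(t)=-2\E\|\nabla u(t)\|_{L^p}^p$. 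The key observation is that, although the linear $L^2$-energy does not see the enhanced viscosity, the latter becomes visible once one compares $u$ with a deterministic \emph{averaged} equation.

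Following the mild-formulation/semigroup approach of \cite{FGL21c}, I would introduce the deterministic limit
\begin{equation*}
\partial_t\bar u = \Delta_p\bar u + \tfrac{\nu}{2}\Delta\bar u,\qquad \bar u(0,\cdot)=u_0.
\end{equation*}
Testing against $\bar u$ gives $\tfrac{d}{dt}\|\bar u\|_{L^2}^2 = -2\|\nabla\bar u\|_{L^p}^p - \nu\|\nabla\bar u\|_{L^2}^2$, and combining Poincar\'e ($\|\nabla\bar u\|_{L^2}^2\geq\lambda_1\|\bar u\|_{L^2}^2$) with the $p$-Laplace dissipation shows $\|\bar u(t)\|_{L^2}^2\leq R^2 e^{-\nu\lambda_1 t}$, which is arbitrarily smaller than $((p-2)\mu)^{-2/(p-2)}$ at $t=1$ as soon as $\nu\geq c(R,\mu)$. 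An ODE comparison then yields the bound \eqref{thm-average.eq} for $\bar u$ itself with the prescribed $\mu$. The theorem thus reduces to showing the quantitative comparison $\E\|u(t)-\bar u(t)\|_{L^2}^2\ll((p-2)\mu t)^{-2/(p-2)}$ for $t\geq 1$, so that the enhanced decay of $\bar u$ transfers to $u$ in $L^2(\Omega,L^2)$ via the triangle inequality.

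To carry out the comparison, I would write both $u$ and $\bar u$ in mild form with respect to $T_t = e^{\nu t\Delta/2}$ and apply It\^o to $\|u-\bar u\|_{L^2}^2$, using the $p$-monotonicity $\langle u-\bar u,\Delta_p u-\Delta_p\bar u\rangle \leq -c_p\|\nabla(u-\bar u)\|_{L^p}^p$, the spectral gap of $\tfrac{\nu}{2}\Delta$, the It\^o isometry, and the smoothing estimate $\|T_r\|_{H^{-1}\to L^2}\lesssim(\nu r)^{-1/2}$. The hard part will be that the quadratic variation of $\nabla u\cdot dW$ contributes a spurious term $\nu\E\|\nabla u\|_{L^2}^2$ to the equation for $\E\|u-\bar u\|_{L^2}^2$ that does \emph{not} vanish with the difference; to tame it one must exploit the fine-scale structure of $W$ (for instance, supporting its covariance at high spatial wavenumbers so that $\nabla u\cdot dW$ decorrelates rapidly), together with the a priori control $\E\int_0^t\|\nabla u\|_{L^p}^p\,ds\leq\|u_0\|_{L^2}^2$ inherited from \eqref{energy identity}. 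The constraint $p<2d/(d-2)$ enters through the Sobolev embedding $H^1\hookrightarrow L^p$ needed to estimate $\Delta_p u$ in negative-order spaces, while the threshold $t\geq 1$ absorbs the transient during which the enhanced linear dynamics sufficiently depress $\|\bar u\|_{L^2}$ before the comparison takes over.
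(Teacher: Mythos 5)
Your plan diverges from the paper's (which never compares $u$ with a limit equation: it works directly with the mild formulation of $u$, splits each unit time interval into two cases according to whether $\E\int_{t_n}^{t_{n+1}}\|\nabla u\|_{L^p}^p\,ds$ is above or below $\mu t_0(\E\|u(t_n)\|_{L^2}^2)^{p/2}$, and in the second case bounds the three terms $v_1,v_2,v_3$ of the Duhamel formula separately before iterating with Lemma \ref{P2}). The divergence itself would be fine, but your route has a genuine gap precisely at the step you flag as ``the hard part'' and then leave unresolved. Applying It\^o's formula to $\|u-\bar u\|_{L^2}^2$ produces, after the cancellation $-\nu\|\nabla(u-\bar u)\|_{L^2}^2$ from the corrector, the uncancelled quadratic-variation contribution $\nu\E\int_0^t\|\nabla u\|_{L^2}^2\,ds$. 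The only a priori control available is $\E\int_0^t\|\nabla u\|_{L^p}^p\,ds\le\tfrac12\|u_0\|_{L^2}^2$, which via H\"older gives at best $\nu\, t^{1-2/p}\|u_0\|_{L^2}^{4/p}$ --- a quantity that \emph{grows} with the large parameter $\nu$ you need for the enhanced decay of $\bar u$. Saying that one should ``support the covariance at high wavenumbers'' points in the right direction (this is exactly how the paper makes the stochastic convolution $v_3$ small, via the factor $\|\theta\|_\infty^{2/\beta}$ in Lemma \ref{v_3}), but the gain from high-frequency noise is only visible in negative-order norms or through the semigroup smoothing inside a stochastic convolution, not in a plain $L^2$ energy estimate for the difference; and for the nonlinear term the monotonicity $\langle \Delta_p u-\Delta_p\bar u,\,u-\bar u\rangle\le 0$ is only available in the $W^{1,p}$--$W^{-1,p/(p-1)}$ duality, so it does not survive a passage to $H^{-\delta}$. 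Without an actual mechanism to beat the factor $\nu$, the comparison estimate does not close.

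A second, independent problem is the claim that the comparison yields \eqref{thm-average.eq} for all $t\ge 1$ in one shot. Any Gronwall-type bound on $\E\|u(t)-\bar u(t)\|_{L^2}^2$ will have constants that are at best uniform on a bounded interval (and typically degrade as $t$ grows), whereas the target requires the error to be dominated by $\big(1+(p-2)\mu t\|u_0\|_{L^2}^{p-2}\big)^{-2/(p-2)}$, which tends to zero. You would need to restart the argument on each interval $[n,n+1]$ using the decay already obtained, and then concatenate the resulting one-step inequalities --- this is exactly the role of Lemma \ref{P2} and the iteration in the paper's proof, and it is absent from your sketch. Note also that the paper's Case 1 shows why some dichotomy is unavoidable: when the nonlinear dissipation over an interval is already large, the energy identity alone gives the required decay, and no comparison with a linearly enhanced equation is needed or helpful there.
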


Taking $\mu$ big enough, we see that the above estimate improves \eqref{deterministic-dissip}, though in a weaker averaged sense.  We can also prove a pathwise assertion on dissipation enhancement.

\begin{theorem}\label{thm-pathwise}
Let $p\in (2, 2d/(d-2))$ and $R>0$ be given. For any $\mu>0$ and $0<q< \frac{\ln 2}{\mu(p-2)^2}$, there exists a noise $W(t,x)$ such that for any $\| u_0\|_{L^2}\leq R,\, u_0\neq 0 $, there is a random constant $C(\omega)>0$ with finite $q$-th moment such that the solution  $\{u(t) \}_{t>0}$ to \eqref{stoch-p-laplace} satisfies, $\P$-a.s. for all $t>0$,
\begin{equation}\label{thm-pathwise.eq}
    \exp\bigg(- \frac1{\|u(t) \|_{L^2}^{p-2}} \bigg) \leq C(\omega) e^{-(p-2)\mu t} \exp\bigg(- \frac1{\|u_0 \|_{L^2}^{p-2}} \bigg);
\end{equation}
if $t>\frac{\ln C(\omega)}{(p-2)\mu}$, the above inequality can be rewritten as
\[\|u(t) \|_{L^2} \leq \frac{\|u_0 \|_{L^2}}{\big(1+\left((p-2)\mu t-\ln C(\omega) \right) \|u_0 \|_{L^2}^{p-2}\, \big)^{1/(p-2)}}. \]
\end{theorem}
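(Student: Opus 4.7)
The starting point is that, because the transport noise $\nabla u \circ \d W$ is divergence-free, the Stratonovich correction vanishes and the energy identity \eqref{energy identity} is a pathwise statement. Differentiating it in time shows that $V(t) := \|u(t)\|_{L^2}^{-(p-2)}$ satisfies the pathwise ODE
\[
\frac{\d}{\d t} V(t) = (p-2)\,\frac{\|\nabla u(t)\|_{L^p}^p}{\|u(t)\|_{L^2}^p} \geq 0,
\]
so $V$ is nondecreasing $\P$-a.s. The target inequality \eqref{thm-pathwise.eq} is equivalent to $V(t) \geq V(0) + (p-2)\mu t - \ln C(\omega)$: the growth of $V$ must exceed the enhanced-dissipation rate $(p-2)\mu t$ up to a random additive shortfall whose exponential lies in $L^q(\Omega)$.

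My plan is to iterate Theorem \ref{thm-average} on the unit intervals $[n,n+1]$. This is legitimate because $\|u(n)\|_{L^2}\leq R$ by pathwise monotonicity, and because $W$ has independent increments, so conditionally on $\F_n$ the solution on $[n,n+1]$ is a fresh solution of \eqref{stoch-p-laplace} starting from $u(n)$. Applying Theorem \ref{thm-average} conditionally and then Jensen's inequality to the convex function $s \mapsto s^{-(p-2)/2}$ turns the $L^2(\Omega,L^2)$ bound into a submartingale estimate
\[
\E\bigl[V(n+1) - V(n) \,\big|\, \F_n\bigr] \geq (p-2)\mu.
\]
Equivalently, $S_n := (p-2)\mu n - (V(n) - V(0))$ is a supermartingale with pathwise increments bounded above by $(p-2)\mu$ (since $V$ is nondecreasing). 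Passage from integer times to arbitrary $t \geq 0$ is then free: monotonicity of $V$ yields $V(t) \geq V(\lfloor t\rfloor)$, costing only a bounded factor absorbed into $C(\omega)$.

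The hard part is upgrading this first-moment bound into an exponential-moment bound $\E[e^{qS_n}] \leq C_q$ for every $q < \ln 2/(\mu(p-2)^2)$, since only then does $C(\omega) := \exp(\sup_n S_n)$ acquire finite $q$-th moment, via Doob's $L^1$-maximal inequality applied to $e^{qS_n}$ or equivalently a Borel-Cantelli summation of $\P(S_n > \lambda)$. First-moment control alone is insufficient: the conditional law of $V(n+1)-V(n)$ could concentrate near $0$ with a heavy upper tail, causing $\E[e^{-q(V(n+1)-V(n))}\mid \F_n]$ to exceed $e^{-q(p-2)\mu}$. I would therefore either revisit the proof of Theorem \ref{thm-average} to extract a geometric tail estimate of the form $\P(V(n+1)-V(n) \geq (p-2)\mu \mid \F_n) \geq \tfrac12$, or exploit the freedom in choosing the noise to enforce an expected dissipation of $2\mu$ per unit interval so that, by Markov, with probability at least $1/2$ the effective increment beats the drift; iterating this Bernoulli-type event across unit intervals produces the geometric tail for $\sup_n S_n$ that generates the constant $\ln 2$ in the threshold for $q$.
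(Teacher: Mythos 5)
There is a genuine gap, and you have in fact located it yourself: the step from the first-moment (supermartingale) bound $\E[V(n+1)-V(n)\mid\F_n]\geq(p-2)\mu$ to an exponential-moment bound on $\sup_n S_n$ is exactly where your argument stops working, and neither of your proposed repairs closes it. The Markov/Bernoulli route fails because a lower bound on the conditional mean of the nonnegative increment $X=V(n+1)-V(n)$ gives \emph{no} lower bound on $\P(X\geq a\mid\F_n)$: the conditional law can put mass $1-\eps$ near $0$ and mass $\eps$ at $2a/\eps$, which is precisely the pathology you flagged for the first-moment bound, and it afflicts the ``expected dissipation $2\mu$ per interval'' fix equally. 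Moreover, even if one could secure $\P(X\geq(p-2)\mu\mid\F_n)\geq\tfrac12$, the resulting comparison walk $n-2G_n$ with success probability exactly $\tfrac12$ has an a.s.\ infinite supremum, so no random constant $C(\omega)$ would emerge at all; one would need a success probability strictly above $\tfrac12$, quantitatively tied to the target threshold $\ln 2/(\mu(p-2)^2)$, and nothing in Theorem \ref{thm-average} supplies that.

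The paper resolves this by a mechanism absent from your proposal: the noise is made \emph{time-inhomogeneous}, with coefficients $\theta^{(n)}$ on $[n,n+1]$ (see \eqref{noise.3}) chosen so that the averaged dissipation rate on the $n$-th interval is $\mu_n=2^n\mu_0$. Proposition \ref{dissipation.2} then gives the unconditional bound \eqref{dissipation.2.eq2} with the geometrically large factor $2^n-1$ in the denominator, while the target \eqref{thm-pathwise.eq} only requires the linear factor $n$. Chebyshev's inequality comparing the two, as in \eqref{P A_n}, yields $\P(A_n)\lesssim(2n/2^n)^{2/(p-2)}\leq 2^{-n/(2(p-2))}$ eventually; Borel--Cantelli then produces $N(\omega)$, and the geometric tail $\P(N\geq k)\lesssim 2^{-k/(p-2)}$ is what makes $C(\omega)=e^{\mu_0(p-2)(1+N(\omega))}$ have finite $q$-th moment precisely for $q<\ln 2/(\mu_0(p-2)^2)$ --- this is where the constant $\ln 2$ actually comes from, not from a per-interval Bernoulli gain. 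Your pathwise reformulation via the nondecreasing quantity $V(t)=\|u(t)\|_{L^2}^{-(p-2)}$ and the conditional Jensen step are fine as far as they go (and the paper uses the same monotonicity to pass from integer to arbitrary times), but without escalating the noise across intervals the bad-event probabilities are not summable with a geometric rate, and the theorem as stated is not reached.
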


To prove Theorem \ref{thm-pathwise}, we will use the Borel-Cantelli lemma as in the proof of \cite[Theorem 1.9]{FGL21c} (see also \cite[Section 7]{BBPS22}), and apply the dissipation estimates obtained in the proof of Theorem \ref{thm-average} on the intervals $[n-1,n],\, n\geq 1$. Here, the main new idea is that we choose noises with some fixed intensity $\kappa$ but different coefficients $\{\theta_{k,n} \}_{k\in \Z^d_0}$ on these intervals; as $n$ increases, the noises produce stronger and stronger dissipation enhancement in the averaged sense. By Chebyshev's inequality, this implies the probabilities of a sequence of events are summable, and then Borel-Cantelli's lemma gives us dissipation enhancement in the pathwise sense, see Section \ref{sec:dissipation.2} for details.

We also mention that, for fixed $p>2$, the range of $q$ provided by Theorem \ref{thm-pathwise} becomes very small for big $\mu>0$. In fact, we can enlarge the range by replacing the exponential $2^n$ in the proof with $a^n$ for some big $a>2$; then we can get an upper bound like $\frac{\ln a}{\mu(p-2)^2}$. We omit the details in this work.

We finish the short introduction with the organization of the paper. In Section \ref{sec: Preliminaries}, we introduce some notations for functional setting and recall some useful results. We present in Section \ref{sec: noise setting} the exact choice of the space-time noise $W(t,x)$, for which the stochastic $p$-Laplace equation \eqref{stoch-p-laplace} admits variational solutions; we also show that the solution can be rewritten in mild formulation. Using the semigroup approach as in \cite{FGL21c}, Theorems \ref{thm-average} and \ref{thm-pathwise} will be proved in Sections \ref{sec:dissipation.1} and \ref{sec:dissipation.2}, respectively. Finally, we demonstrate in the appendix the well posedness of \eqref{stoch-p-laplace} using the variational framework.

\section{Preliminaries}\label{sec: Preliminaries}


In this section, we introduce some notations and provide several important tools that will be used in the paper.

Let $\T^d=\R^d/\Z^d$ be the $d$-dimensional torus, $d \geq 2$; $\Z_0^d = \Z^d \setminus \{0\}$ is the set of nonzero integer points. The brackets $\langle \cdot,  \cdot \rangle$ stand for the inner product in $L^2(\T^d)$ and the duality between elements in $W^{1, p}(\T^d)$ and $W^{-1, \frac{p}{p-1}}(\T^d)$.
The norms in spaces $L^2(\T^d)$, $L^p(\T^d)$, and $W^{1,p}(\T^d)$ are denoted as $\|\cdot\|_{L^2}$, $\|\cdot\|_{L^p}$, and $\|\cdot \|_{W^{1, p}}$, respectively. Let $\{e_k \}_k$ be the usual complex basis of $L^2(\T^d,  \mathbb C)$.
As the $p$-Laplace equations considered below preserve the means of solutions,  we assume in this paper that the function spaces consist of functions on $\T^d$ with zero average.

Let $\Delta$ be the Laplace operator on $\T^d$ and $\lambda_k =4 \pi^2 |k|^2, \, k \in \Z^d,$ are the eigenvalues of $\Delta$. We denote by $\{ e^{t \Delta} \}_{t \geq 0}$ the usual heat semigroup on $\T^d$. For $p >2,$ we define the $p$-Laplace operator as below:
\begin{equation}\label{def,p-laplace}
    \Delta_p u \assign \div (|\nabla u|^{p - 2} \nabla u).
\end{equation}
We recall some properties of $\Delta_p$.

\begin{proposition}\label{pro.p-laplace}
    The p-Laplace operator $\Delta_p $ satisfies the estimates below:
    \begin{equation*}
        \begin{split}
            \langle \Delta_p u - \Delta_p v,  u - v \rangle \leq 0 , &\\
            \langle \Delta_p v,  v \rangle \leq - c \| v \|_{W^{1,p}}^p, &
        \end{split}
    \end{equation*}
    for all $u,v \in W^{1,p}(\T^d),$ where $c$ is a positive number that only depends on $p$. In particular, the second assertion implies
      $$\|\Delta_p v \|_{W^{-1, \frac{p}{p-1}}} \leq c \| v \|_{W^{1,p}}^{p-1}. $$
\end{proposition}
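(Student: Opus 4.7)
My plan is to establish all three assertions by combining integration by parts on $\T^d$ with a standard monotonicity inequality for the vector field $\xi \mapsto |\xi|^{p-2}\xi$, and then pass to the duality estimate via H\"older.

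For the first inequality, I would start by integrating by parts, which on the torus produces no boundary terms:
\[
\langle \Delta_p u - \Delta_p v, u-v\rangle = -\int_{\T^d} \bigl(|\nabla u|^{p-2}\nabla u - |\nabla v|^{p-2}\nabla v\bigr)\cdot (\nabla u - \nabla v)\,\d x.
\]
Then I would invoke the classical pointwise inequality
\[
\bigl(|a|^{p-2}a - |b|^{p-2}b\bigr)\cdot (a-b)\geq 0 \qquad \text{for all } a,b\in\R^d,\ p\geq 2,
\]
which is a direct consequence of the convexity of $\xi\mapsto \tfrac{1}{p}|\xi|^p$ (so that $|\xi|^{p-2}\xi$ is its monotone gradient). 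Applied pointwise to $a=\nabla u(x)$, $b=\nabla v(x)$, this forces the integrand to be nonnegative, whence monotonicity follows.

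For the second inequality, a single integration by parts gives
\[
\langle \Delta_p v, v\rangle = -\int_{\T^d} |\nabla v|^{p-2}\nabla v\cdot \nabla v\,\d x = -\|\nabla v\|_{L^p}^p.
\]
To convert this into a bound by the full $W^{1,p}$-norm, I would use the Poincar\'e inequality on $\T^d$, which applies since the functions considered have zero mean: $\|v\|_{L^p}\leq C_p\|\nabla v\|_{L^p}$. Hence $\|v\|_{W^{1,p}}^p \leq (1+C_p^p)\|\nabla v\|_{L^p}^p$, and the desired estimate holds with $c=(1+C_p^p)^{-1}$.

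Finally, for the dual-norm estimate, I would argue by duality: for any test function $\varphi\in W^{1,p}(\T^d)$ with zero mean,
\[
|\langle \Delta_p v,\varphi\rangle| = \Bigl|\int_{\T^d}|\nabla v|^{p-2}\nabla v\cdot\nabla \varphi\,\d x\Bigr| \leq \bigl\||\nabla v|^{p-1}\bigr\|_{L^{p/(p-1)}}\|\nabla \varphi\|_{L^p} = \|\nabla v\|_{L^p}^{p-1}\|\nabla\varphi\|_{L^p},
\]
by H\"older's inequality with conjugate exponents $p$ and $p/(p-1)$. Taking the supremum over $\varphi$ with $\|\varphi\|_{W^{1,p}}\leq 1$ and bounding $\|\nabla v\|_{L^p}\leq \|v\|_{W^{1,p}}$ yields the claimed inequality (up to a multiplicative constant depending only on $p$, possibly after absorbing the Poincar\'e constant). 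No step is really an obstacle here; the only subtlety is ensuring zero-mean functions throughout so that Poincar\'e applies and $W^{-1,p/(p-1)}$ can be identified with the dual of the zero-mean subspace of $W^{1,p}$.
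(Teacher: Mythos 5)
Your proof is correct and is exactly the standard argument (monotonicity of the gradient of the convex map $\xi\mapsto\frac1p|\xi|^p$, integration by parts, Poincar\'e on zero-mean functions, and H\"older for the dual norm) that the paper invokes by simply citing \cite[Example 4.1.9]{liu15} without reproducing it. The only cosmetic remark is that in the final duality step no extra multiplicative constant is actually needed, since $\|\nabla v\|_{L^p}\leq\|v\|_{W^{1,p}}$ and $\|\nabla\varphi\|_{L^p}\leq\|\varphi\|_{W^{1,p}}$ already give the bound with constant $1$.
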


\begin{proof}
    The proof can be found in \cite[Example 4.1.9]{liu15}.
\end{proof}


The next lemma (see \cite[Lemma 4.2]{FHX21}) is an easy fact, but is necessary to deal with Case 1 of Section \ref{sec:dissipation.1}.

\begin{lemma}\label{P1}
    For any $p>2$, it holds
   \[ 1 - \frac{2 x}{p - 2} \leq \frac{1}{(1 + x)^{\frac2{p-2}}} , \quad  x > 0.\]
\end{lemma}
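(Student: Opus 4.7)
The plan is to reduce the inequality to a standard calculus fact. Setting $\alpha = \frac{2}{p-2}$, which is strictly positive since $p > 2$, the claim becomes
\[
(1+x)^{-\alpha} \geq 1 - \alpha x \qquad \text{for all } x > 0.
\]
When $x \geq 1/\alpha$ the right-hand side is non-positive while the left-hand side is strictly positive, so the inequality is trivial. The work is therefore in the range $0 < x < 1/\alpha$.

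I would then give a one-line convexity argument: the function $g(x) = (1+x)^{-\alpha}$ has
\[
g''(x) = \alpha(\alpha+1)(1+x)^{-\alpha-2} > 0
\]
on $(-1,\infty)$, hence is convex there; since its tangent line at $x=0$ is exactly $y = g(0) + g'(0)x = 1 - \alpha x$, convexity yields $g(x) \geq 1 - \alpha x$ for all $x > -1$, which includes all $x > 0$. Alternatively, one can define $f(x) = (1+x)^{-\alpha} - 1 + \alpha x$ and observe $f(0) = 0$ and $f'(x) = \alpha\bigl(1-(1+x)^{-\alpha-1}\bigr) \geq 0$ for $x \geq 0$, again giving $f(x) \geq 0$.

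Substituting $\alpha = 2/(p-2)$ back recovers the stated inequality. There is no real obstacle here; the lemma is an elementary calculus estimate, and the only thing to watch is to note explicitly why $\alpha > 0$ (which is where the hypothesis $p > 2$ is used) so that both the convexity argument and the sign of $f'$ are valid.
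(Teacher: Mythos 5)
Your argument is correct and is essentially the paper's own proof: both observe that $1-\frac{2x}{p-2}$ is the tangent line at $x=0$ to the convex function $(1+x)^{-2/(p-2)}$, so the inequality follows from convexity. Your version simply spells out the convexity computation $g''(x)=\alpha(\alpha+1)(1+x)^{-\alpha-2}>0$ and adds an equivalent monotone-derivative alternative, neither of which changes the method.
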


\begin{proof}
    The linear function $1 - \frac{2 x}{p - 2}$ and the convex function  $\frac{1}{(1 + x)^{\frac2{p-2}}}$
    are tangent at $x=0$. The inequality holds by the property of convex functions.
\end{proof}

The following lemma is motivated by \cite[Lemma 4.5]{FHX21}; it is the key ingredient for running the iteration argument when proving Theorem \ref{thm-average} and Proposition \ref{dissipation.2}.

\begin{lemma}\label{P2}
    For $p>2,$ suppose $a,b \geq 0$, and
    \begin{equation*}
        y \leq \frac{x}{\big(1+a x^{\frac{p-2}{2}} \big)^{\frac{2}{p-2}}}, \quad z \leq \frac{y}{\big(1+b y^{\frac{p-2}{2}}\big)^{\frac{2}{p-2}}} ,\quad x,y,z \geq 0,
    \end{equation*}
    then we have
    \[z \leq \frac{x}{\big(1+(a+b) x^{\frac{p-2}{2}} \big)^{\frac{2}{p-2}}} .\]
\end{lemma}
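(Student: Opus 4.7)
The plan is to linearize the two inequalities by a change of variable that absorbs the awkward exponent $2/(p-2)$. Set $\alpha \assign (p-2)/2 > 0$, so the given inequalities can be rewritten, after raising both sides to the power $\alpha$, as
\[
y^\alpha \leq \frac{x^\alpha}{1 + a x^\alpha}, \qquad z^\alpha \leq \frac{y^\alpha}{1 + b y^\alpha},
\]
and the target inequality is equivalent to $z^\alpha \leq x^\alpha/(1+(a+b)x^\alpha)$.

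First I would dispose of the degenerate cases. If $x=0$, then the first hypothesis forces $y=0$, hence $z=0$, and the claim is trivial; the cases $y=0$ or $z=0$ with $x>0$ are also trivial since the right-hand side of the conclusion is positive. So I may assume $x,y,z > 0$.

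Under this assumption, both fractions on the right-hand sides are positive (note $1 + a x^\alpha \geq 1 > 0$ since $a\geq 0$), and we may invert to obtain
\[
\frac{1}{y^\alpha} \geq \frac{1}{x^\alpha} + a, \qquad \frac{1}{z^\alpha} \geq \frac{1}{y^\alpha} + b.
\]
Adding these — the crucial linearization — yields $1/z^\alpha \geq 1/x^\alpha + a + b$, and inverting once more gives exactly $z^\alpha \leq x^\alpha/(1+(a+b)x^\alpha)$. Raising to the power $1/\alpha = 2/(p-2)$ recovers the desired bound.

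There is essentially no obstacle: the only thing to be slightly careful about is the handling of the degenerate cases and verifying that the fractions are well-defined and positive before inverting, which is immediate from $a,b \geq 0$ and $x,y > 0$. The whole content of the lemma is that the quantity $1/w^{(p-2)/2}$ behaves additively under the composition of the two bounds, which is precisely the form in which the iteration in the proofs of Theorem \ref{thm-average} and Proposition \ref{dissipation.2} will feed successive time-interval dissipation estimates into one another.
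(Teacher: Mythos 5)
Your proof is correct and amounts to the same elementary computation as the paper's: the paper substitutes the bound on $y$ into $y/\big(1+b y^{\frac{p-2}{2}}\big)^{\frac{2}{p-2}}$ using its monotonicity in $y$ and then simplifies the nested fraction, which is exactly the additivity $1/z^{\alpha}\geq 1/y^{\alpha}+b\geq 1/x^{\alpha}+a+b$ that your reciprocal change of variables makes explicit. Your formulation is a slightly cleaner packaging (it replaces the unproved monotonicity remark by the monotonicity of $t\mapsto 1/t$ and handles the degenerate cases explicitly), but it is not a different argument.
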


\begin{proof}
    Note that $\frac{y}{\big(1+b y^{\frac{p-2}{2}} \big)^{\frac{2}{p-2}}}$ is monotonically increasing in the variable $y$, it holds
    \begin{equation*}
        \begin{split}
            z &\leq \frac{y}{\big(1+b y^{\frac{p-2}{2}} \big)^{\frac{2}{p-2}}} \leq \frac{x}{\big(1+a x^{\frac{p-2}{2}} \big)^{\frac{2}{p-2}}} \cdot \frac{1}{\left(1+b \frac{x^{\frac{p-2}{2}}}{1+a x^{\frac{p-2}{2}}}\right)^{\frac{2}{p-2}}} \\
            & \leq \frac{x}{\big(1+(a+b) x^{\frac{p-2}{2}} \big)^{\frac{2}{p-2}}} .
        \end{split}
    \end{equation*}
\end{proof}

We state the classical heat semigroup estimates (see e.g. \cite{Lun95}) for later use.
\begin{lemma}
    \label{P3} For any $ 2 \leq p \leq + \infty, $ and any $T>0$, there exists a
    constant $C>0,$ such that
    \[ \|e^{ t \Delta} \phi \|_{W^{1,  p}} \leq C t^{-\big( \frac{1}{2} + \frac{d}{2} ( \frac{1}{2} - \frac{1}{p}) \big)} \| \phi \|_{L^2} \]
    for any $\phi \in L^{2}(\mathbb{T}^d), $ and any $t \in (0, T].$
\end{lemma}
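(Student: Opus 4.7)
The estimate is a standard heat-semigroup smoothing bound, so the plan is to assemble it from two well-known ingredients combined via the semigroup property $e^{t\Delta} = e^{(t/2)\Delta} \circ e^{(t/2)\Delta}$, which is exactly the structure suggested by the two-part exponent $\tfrac12 + \tfrac{d}{2}(\tfrac12 - \tfrac1p)$. First I would establish the $L^2$-gradient bound
\[
 \|\nabla e^{s\Delta} \phi\|_{L^2} \leq C s^{-1/2}\|\phi\|_{L^2}, \qquad s\in(0,T],
\]
which on the torus is transparent via Fourier series: expanding $\phi = \sum_{k\in\Z_0^d}\hat\phi_k e_k$ (recall we work on mean-zero functions), one has
\[
 \|\nabla e^{s\Delta}\phi\|_{L^2}^2 = \sum_{k\in\Z_0^d} \lambda_k e^{-2\lambda_k s} |\hat\phi_k|^2 \leq \Big(\sup_{k\in\Z_0^d} \lambda_k e^{-2\lambda_k s}\Big) \|\phi\|_{L^2}^2 \leq \frac{C}{s} \|\phi\|_{L^2}^2,
\]
using that $\xi \mapsto \xi e^{-2\xi s}$ is maximized at $\xi = 1/(2s)$.

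Next I would establish the $L^2 \to L^p$ smoothing estimate
\[
 \|e^{s\Delta}\psi\|_{L^p} \leq C s^{-\frac{d}{2}(\frac12 - \frac1p)} \|\psi\|_{L^2}, \qquad s\in(0,T].
\]
This follows by Riesz--Thorin interpolation between the trivial $L^2 \to L^2$ contraction bound and the $L^1 \to L^\infty$ bound $\|e^{s\Delta}\psi\|_{L^\infty} \leq C s^{-d/2}\|\psi\|_{L^1}$, which in turn follows from the short-time Gaussian estimate on the heat kernel of $\T^d$ (obtained by periodizing the Euclidean Gaussian, with the constant absorbed into $C$ depending on $T$ for $s\in(0,T]$).

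With these two ingredients, the combination is then immediate. For the gradient part of $\|e^{t\Delta}\phi\|_{W^{1,p}}$ write
\[
 \nabla e^{t\Delta}\phi = e^{(t/2)\Delta}\bigl(\nabla e^{(t/2)\Delta}\phi\bigr),
\]
apply the $L^2\to L^p$ smoothing to the outer semigroup with $s=t/2$, and then the $L^2$-gradient bound to $\nabla e^{(t/2)\Delta}\phi$; this yields $\|\nabla e^{t\Delta}\phi\|_{L^p} \leq C t^{-1/2 - \frac{d}{2}(\frac12-\frac1p)}\|\phi\|_{L^2}$, absorbing powers of $2$ into $C$. For the $L^p$ part, the smoothing estimate applied directly with $s=t$ gives $\|e^{t\Delta}\phi\|_{L^p} \leq C t^{-\frac{d}{2}(\frac12-\frac1p)} \|\phi\|_{L^2}$, and since $t\in(0,T]$ this is dominated by the gradient contribution (up to a factor depending on $T$). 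Adding the two pieces yields the claimed bound.

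There is really no hard step: the only mild subtleties are restricting to mean-zero functions to avoid the $k=0$ mode (already built into the paper's convention) and handling the fact that on $\T^d$ the heat-kernel estimate has to be localized to $s\in (0,T]$, which forces the constant to depend on $T$ — exactly as stated in the lemma.
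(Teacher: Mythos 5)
The paper does not actually prove this lemma --- it is stated as a classical fact and justified only by a citation to Lunardi's book --- so your self-contained argument is a genuinely different (and more informative) route. Your overall architecture is sound: the splitting $e^{t\Delta}=e^{(t/2)\Delta}e^{(t/2)\Delta}$ correctly accounts for the two pieces of the exponent, the Fourier-series proof of $\|\nabla e^{s\Delta}\phi\|_{L^2}\leq Cs^{-1/2}\|\phi\|_{L^2}$ on mean-zero functions is clean, and the absorption of the $L^p$ part into the gradient part for $t\in(0,T]$ is fine.

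There is, however, one concrete slip in the middle step: Riesz--Thorin interpolation between the pair $L^2\to L^2$ (norm $1$) and the pair $L^1\to L^\infty$ (norm $Cs^{-d/2}$) only produces bounds $L^{p_\theta}\to L^{q_\theta}$ along the dual line $\tfrac{1}{p_\theta}+\tfrac{1}{q_\theta}=1$, and the only point on that line with source exponent $2$ is $L^2\to L^2$ itself; it does not give $L^2\to L^p$ for $p>2$. The fix is standard and short: either interpolate $L^2\to L^2$ against the endpoint $\|e^{s\Delta}\psi\|_{L^\infty}\leq \|K_s\|_{L^2}\|\psi\|_{L^2}\leq Cs^{-d/4}\|\psi\|_{L^2}$ (which follows from Cauchy--Schwarz and $\|K_s\|_{L^2}^2=K_{2s}(0)\leq Cs^{-d/2}$), or invoke the full two-parameter family $\|e^{s\Delta}\|_{L^q\to L^r}\leq Cs^{-\frac{d}{2}(\frac1q-\frac1r)}$, $q\leq r$, obtained by interpolating the three estimates $L^1\to L^1$, $L^\infty\to L^\infty$ and $L^1\to L^\infty$ (the admissible exponent pairs form a convex set containing the triangle $\{\tfrac1r\leq\tfrac1q\}$, which contains $(\tfrac12,\tfrac1p)$). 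With that correction the proof is complete and, unlike the paper, does not outsource the estimate to a reference.
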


\section{Choice of noise $W$ and mild formulation of \eqref{stoch-p-laplace}}\label{sec: noise setting}

In Section \ref{subsec:noise setting}, we describe the choice of the noise $W(t, x)$ and show the existence and uniqueness of variational solutions of SPDE \eqref{stoch-p-laplace}, by using a general result proved in the appendix.
Section \ref{subsec:mild form} explains the link between variational solution and mild form of SPDE \eqref{stoch-p-laplace}.

\subsection{Choice of noise}\label{subsec:noise setting}

As in \cite[Section 1.3]{FGL21c}, we perturb the equations \eqref{stoch-p-laplace} with the space-time noise below:
\begin{equation}\label{noise}
    W(t, x)= \sqrt{C_{d} \kappa}\,  \sum_{k\in \Z^d_0}\sum_{i=1}^{d-1} \theta_{k} \sigma_{k, i}(x) W^{k, i}_{t} ,
\end{equation}
where $C_{d}=d/(d-1)$ is a normalizing constant,  $\kappa>0$ is the noise intensity and $\theta\in\ell^{2} =\ell^{2}(\Z^d_0)$,
the space of square summable sequences indexed by $\Z_0^d$. $\{W^{k, i}:k\in\mathbb{Z}^{d}_{0},  i=1, \ldots, d-1\}$
are standard complex Brownian motions defined on a filtered probability space $(\Omega,  \mathcal F,  (\mathcal F_t),  \P)$, satisfying
\begin{equation}\label{noise.1}
    \overline{W^{k, i}} = W^{-k, i},  \quad\big[W^{k, i}, W^{l, j} \big]_{t}= 2t \delta_{k, -l} \delta_{i, j} .
\end{equation}
$\{\sigma_{k, i}: k\in\mathbb{Z}^{d}_{0},  i=1, \ldots, d-1\}$ are divergence free vector fields on $\T^d$ defined as
\begin{equation}\label{sigma}
    \sigma_{k, i}(x) = a_{k, i} e_{k}(x) ,
\end{equation}
where $\{a_{k, i}\}_{k, i}$ is a subset of the unit sphere $\mathbb{S}^{d-1}$ such that: (i) $a_{k, i}=a_{-k, i}$ for all
$k\in \mathbb{Z}^{d}_{0}, \,  i=1, \ldots, d-1$; (ii) for fixed $k$,  $\{a_{k, i}\}_{i=1}^{d-1}$ is an ONB of
$k^{\perp}=\{y\in\mathbb{R}^{d}:y\cdot k=0 \}$. It holds that $\sigma_{k, i}\cdot \nabla e_k = \sigma_{k, i}\cdot \nabla e_{-k} \equiv 0$
for all $k\in \Z^d_0$ and $1\leq i\leq d-1$.

We shall always assume that
\begin{itemize}
    \item $\theta \in \ell^2$ is symmetric,  i.e. $\theta_k = \theta_l$ for all $k, l\in \Z^d_0$ satisfying $|k|=|l|$;
    \item $\|\theta \|_{\ell^2} = \big(\sum_{k \in \Z_0^d} \theta_k^2 \big)^{1/2} =1$.
\end{itemize}
In fact, under the assumptions above, the noise is $W$ real. We rewrite \eqref{noise} as
\begin{equation}\label{noise.2}
    \begin{split}
        W(t, x) &= \sqrt{C_{d} \kappa}\,  \sum_{k\in \Z^d_0}\sum_{i=1}^{d-1}
        \theta_{k} \left\{ {\rm Re}(\sigma_{k, i}(x)) {\rm Re}(W^{k, i}_{t})-{\rm Im}(\sigma_{k, i}(x)) {\rm Im}(W^{k, i}_{t}) \right\} \\
        & = \sum_{k\in \Z^d_0}\sum_{i=1}^{d-1} \xi_{k, i}(x) B_t^{k, i} .
    \end{split}
\end{equation}
$\{\xi_{k, i}: k\in\mathbb{Z}^{d}_{0},  i=1, \ldots, d-1\}$ are also divergence free vector fields on $\T^d$ defined as
\begin{equation}\label{xi}
    \xi_{k, i}(x) =\left\{\begin{array}{ll}
        2 \sqrt{C_{d} \kappa}\, \theta_{k} {\rm Re}(\sigma_{k, i}(x)), & \quad k \in \Z_{+}^d, \\
        2 \sqrt{C_{d} \kappa}\, \theta_{k} {\rm Im}(\sigma_{k, i}(x)), & \quad k \in \Z_{-}^d,
    \end{array} \right.
\end{equation}
where $\Z^d_0 = \Z^d_+ \cup \Z^d_-$ is a partition of $\Z^d_0$ satisfying $\Z^d_+ = -\Z^d_-$. $\{B^{k, i}: k\in\mathbb{Z}^{d}_{0},  i=1, \ldots, d-1\}$ are independent standard Brownian motions  defined as
\begin{equation}\label{B_t}
    B_t^{k, i}=\left\{\begin{array}{ll}
        {\rm Re}(W^{k, i}_{t}) ,& \quad k \in \Z_{+}^d, \\
        -{\rm Im}(W^{k, i}_{t}) ,& \quad k \in \Z_{-}^d.
    \end{array} \right.
\end{equation}

The SPDE \eqref{stoch-p-laplace} can be rewritten as the form \eqref{stoch-p-laplace.2} in the appendix.
By the above assumptions of noise, we have $\| \sigma_{k,i} \|_{L^\infty} \leq 1 , $ thus
\begin{equation*}
    \sum_{k \in \Z^d} \sum_{i=1}^{d-1} \| \xi_{k, i} \|_{L^{\frac{2p}{p-2}}(\T^d)}^{2} \leq 4C_d \kappa (d-1)\sum_{k \in \Z^d} \theta_k^2=4\kappa d <+\infty .
\end{equation*}
By Theorem \ref{well-posed}, we conclude that variational solutions of SPDE \eqref{stoch-p-laplace} exist and are unique; moreover, the energy identity \eqref{energy identity} holds.

\subsection{Mild formulation}\label{subsec:mild form}

Under the noise setting in Section \ref{subsec:noise setting}, we will show that the solution of SPDE \eqref{stoch-p-laplace} has a mild formulation, see \eqref{mild form.eq} below.
Transforming \eqref{stoch-p-laplace} into the It\^o form and similarly to the discussion in \cite[Section 2]{Galeati20}, we know that the Stratonovich-It\^o corrector is  $S(u)=\kappa \Delta u$, hence we obtain
\begin{equation}\label{p-Laplace.3}
    d u = \Delta_p u \, d t + \kappa \Delta u \, d t +
    \sum_{k \in \Z^d} \sum_{i=1}^{d-1} \xi_{k, i} (x) \cdot \nabla u \, d B_t^{k, i} ,
\end{equation}
where $\{\xi_{k, i}\}$ and $\{B_t^{k, i}\}$ are defined in \eqref{xi} and \eqref{B_t} respectively.

\begin{theorem}\label{mild form}
    Let $u$ be the variational solution of the SPDE \eqref{p-Laplace.3} in the sense of Definition \ref{variational solution}. Then $\P $-a.s. for any $t \in [0, T], $ it holds
    \begin{equation}\label{mild form.eq}
        u (t) = e^{\kappa t \Delta} u_0 + \int_0^t e^{\kappa (t - s) \Delta} \Delta_p u(s) \, \, d s
        +\sum_{k \in \Z^d} \sum_{i = 1}^{d - 1} \int_0^t e^{\kappa (t - s) \Delta} \xi_{k,  i} \cdot \nabla u(s) \, d B_s^{k, i} .
    \end{equation}
\end{theorem}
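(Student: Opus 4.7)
The plan is to derive the mild formulation by testing the variational equation against time-dependent smooth test functions $\phi_s := e^{\kappa(t-s)\Delta}\phi$, where $t$ is fixed and $\phi \in C^\infty(\T^d)$ with zero mean, and then applying an Itô-type product rule to the pairing $\langle u(s), \phi_s\rangle$.

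First, I would fix $t\in [0,T]$ and a smooth $\phi$. Since the heat semigroup is self-adjoint and smoothing, $\phi_s$ is smooth in $x$ for every $s$, $C^1$ in $s$ with $\partial_s \phi_s = -\kappa\Delta \phi_s$, and in particular lies in $W^{1,p}$ uniformly on $[0,t]$. Starting from the variational identity for \eqref{p-Laplace.3}, one formally applies the product rule
\[
d\langle u(s),\phi_s\rangle = \langle du(s),\phi_s\rangle + \langle u(s),\partial_s\phi_s\rangle\, ds,
\]
where the right-hand side is well defined: the pairing $\langle \Delta_p u(s),\phi_s\rangle$ is the $W^{-1,p/(p-1)}$--$W^{1,p}$ duality, and the other pairings are in $L^2$. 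By integration by parts, $\kappa\langle\Delta u(s),\phi_s\rangle - \kappa\langle u(s),\Delta \phi_s\rangle = 0$, so the $\kappa\Delta$ drift cancels with the time-derivative of the test function and we obtain
\[
\langle u(t),\phi\rangle - \langle u_0,e^{\kappa t\Delta}\phi\rangle
= \int_0^t \langle \Delta_p u(s),e^{\kappa(t-s)\Delta}\phi\rangle\, ds
 + \sum_{k,i}\int_0^t \langle \xi_{k,i}\cdot\nabla u(s), e^{\kappa(t-s)\Delta}\phi\rangle\, dB_s^{k,i}.
\]

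Second, I would use the self-adjointness of $e^{\kappa(t-s)\Delta}$ to transfer the semigroup onto the other factor and then invoke a deterministic Fubini argument and the stochastic Fubini theorem (applicable once one checks square-integrability of the integrands, which follows from the variational energy estimate \eqref{energy identity} together with the semigroup estimate in Lemma \ref{P3} applied to $e^{\kappa(t-s)\Delta}\Delta_p u(s)$ and $e^{\kappa(t-s)\Delta}(\xi_{k,i}\cdot\nabla u(s))$). This rewrites the identity as
\[
\langle u(t),\phi\rangle = \Big\langle e^{\kappa t\Delta}u_0 + \int_0^t e^{\kappa(t-s)\Delta}\Delta_p u(s)\, ds + \sum_{k,i}\int_0^t e^{\kappa(t-s)\Delta}(\xi_{k,i}\cdot\nabla u(s))\, dB_s^{k,i},\ \phi\Big\rangle.
\]
Since this is valid for every smooth mean-zero $\phi$ and the right-hand side of \eqref{mild form.eq} belongs to $L^2(\T^d)$ (again by Lemma \ref{P3}, Proposition \ref{pro.p-laplace}, and the energy identity), identity \eqref{mild form.eq} follows $\P$-a.s. for each fixed $t$; continuity in $t$ upgrades this to $\P$-a.s. for all $t\in[0,T]$.

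The main obstacle I foresee is rigorously justifying the Itô product rule with a time-dependent test function when $\Delta_p u(s)$ only lies in the dual space $W^{-1,p/(p-1)}$, not in $L^2$. I would handle this either by the standard infinite-dimensional Itô formula of Krylov–Rozovskii type applied in the Gelfand triple $W^{1,p}\hookrightarrow L^2\hookrightarrow W^{-1,p/(p-1)}$ (treating $\phi_s$ as a regular, deterministic, time-varying element of $W^{1,p}$), or, if a general reference is not convenient, via a Galerkin approximation: project \eqref{p-Laplace.3} onto the span of finitely many $e_k$, for which the mild formulation is immediate by finite-dimensional Itô, and then pass to the limit using the uniform bounds from \eqref{energy identity} and the regularizing action of $e^{\kappa(t-s)\Delta}$ controlled by Lemma \ref{P3}. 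The stochastic Fubini step is routine once the integrability of the integrands is verified, so I expect the variational-to-mild product rule to be the only technical point requiring care.
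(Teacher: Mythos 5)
Your proposal is correct and follows the same underlying idea as the paper's proof, namely a Duhamel/variation-of-constants argument obtained by testing the variational identity; the only difference is in the choice of test functions. You work with a general smooth $\phi$ and the time-dependent test function $e^{\kappa(t-s)\Delta}\phi$, which forces you to justify an It\^o product rule for $\langle u(s),\phi_s\rangle$ in the Gelfand triple $W^{1,p}\hookrightarrow L^2\hookrightarrow W^{-1,p/(p-1)}$ --- the step you yourself flag as the main technical obstacle. The paper instead specializes to $\phi=e_l$ for each Fourier mode $l$: then $e^{\kappa(t-s)\Delta}e_l=e^{-\kappa\lambda_l(t-s)}e_l$, and the product rule reduces to the elementary scalar It\^o formula for $e^{\kappa\lambda_l s}\langle u(s),e_l\rangle$, with the resulting identities assembled over all $l$ on a common full-measure set to recover \eqref{mild form.eq} in Fourier modes. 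In effect the paper's proof is your Galerkin fallback carried out in the eigenbasis of $\Delta$, which diagonalizes the semigroup and dissolves the infinite-dimensional It\^o-formula issue entirely; your version is marginally more general (it would work for semigroups not diagonalized by the chosen basis) but requires either the Krylov--Rozovskii It\^o formula or the approximation argument you sketch, plus the stochastic Fubini/commutation step, all of which become trivial in the mode-by-mode formulation.
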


\begin{proof}
    For fixed $t \in [0, T], $ the variational solution $u$ of \eqref{stoch-p-laplace} satisfies
    \[ u (t) = u_0 + \int_0^t \Delta_p u \, \, d s +
    \int_0^t \kappa \Delta u \, \, d s + \sum_{k \in \Z^d} \sum_{i=1}^{d-1}
    \int_0^t \xi_{k, i} (x) \cdot \nabla u (s) \, \, d B_s^{k, i}  \]
    in $W^{- 1,  \frac{p}{p - 1}} (\T^d)$. By the property of Bochner integral, for any $\varphi \in C^{\infty} (\T^d), $ we have
    \begin{equation*}
        \begin{split}
            \langle u (t),  \varphi \rangle =&\, \langle u_0,  \varphi \rangle + \int_0^t \langle \Delta_p u(s),  \varphi \rangle \, \, d s
            + \kappa \int_0^t \langle \Delta u(s),  \varphi \rangle \, \, d s \\
            & + \sum_{k \in \Z^d} \sum_{i = 1}^{d - 1} \int_0^t \langle \xi_{k,  i} \cdot \nabla u(s), \varphi \rangle \, d B_s^{k, i} .
        \end{split}
    \end{equation*}
    For fixed $l \in \Z^d$, we take $\varphi=e_l$, then
    \begin{equation*}
        d \langle u (s),  e_l \rangle = \langle \Delta_p u(s),  e_l \rangle \, d s- \kappa \lambda_l \langle u(s),  e_l \rangle \, \, d s
        + \sum_{k \in \mathbb{Z}^d} \sum_{i = 1}^{d - 1} \langle \xi_{k,  i} \cdot \nabla u(s),  e_l \rangle \, d B_s^{k, i} .
    \end{equation*}
    Applying the It\^o formula to the process $e^{\kappa t \lambda_l}  \langle u(t) , \varphi \rangle ,$ we have
    \begin{equation*}
        \begin{split}
            d \big( e^{\kappa \lambda_l s} \langle u (s), e_l \rangle \big) & = \kappa \lambda_l e^{\kappa \lambda_l s}
            \langle u (s),  e_l \rangle \, \, d s + e^{\kappa \lambda_l s} d \langle u (s),  e_l \rangle \\
            & = e^{\kappa \lambda_l s} \langle \Delta_p u(s),  e_l \rangle \, \, d s + e^{\kappa \lambda_l s}
            \sum_{k \in \Z^d} \sum_{i = 1}^{d - 1}  \langle \xi_{k,  i} \cdot \nabla u(s),  e_l \rangle \, d B_s^{k, i} ,
        \end{split}
    \end{equation*}
    integrating in time yields, $\P$-a.s. for all $t \in [0, T]$,
    \begin{equation*}
        \begin{split}
            \langle u (t),  e_l \rangle &= e^{- \kappa \lambda_l t} \langle u_0, e_l \rangle +
            \int_0^t e^{- \kappa \lambda_l (t - s)} \langle \Delta_p u(s),  e_l \rangle \, \, d s \\
            &+\sum_{k \in Z^d} \sum_{i = 1}^{d - 1}  \int_0^t e^{-\kappa \lambda_l (t - s)}
            \langle \xi_{k,  i} \cdot \nabla u(s),  e_l \rangle \, d B_s^{k, i}  .
        \end{split}
    \end{equation*}
    We can then find $\Gamma\subset \Omega$ of full probability such that the above equality holds
    for all $t\in [0, T]$ and all $l\in \Z^2$. But this is exactly \eqref{mild form.eq} written in Fourier modes.
\end{proof}

\section{Averaged dissipation enhancement}\label{sec:dissipation.1}

This section is devoted to showing dissipation enhancement in the sense of expectation by choosing appropriate intensity
$\kappa$ and coefficients $\{ \theta_{k} \}_{k \in \Z_0^d}$ of the noise. We fix $\beta>\frac{d}{2}+1,\, t_0>0$, let $t_n \assign n t_0, \, n \in \Z_+$,
and assume $p \in \big(2,\frac{2d}{d-2} \big),$ initial data $\|u_0\|_{L^2} \leq R.$
Then we define two constants $C_1(\kappa,t_0,R)$ and $C_2(\theta,\mu)$ as below:
\begin{equation}\label{C_1,C_2}
    \begin{split}
        C_1(\kappa,t_0,R) & \assign \Bigg\{ \frac{6}{(4 \pi^{2} \kappa t_{0})^2} +
        6 \bigg(\frac{C_{0}}{\kappa^{\eta} t_{0}^{\eta-1/p}} \bigg)^{2} R^{2-4/p} \Bigg\}^{\frac{p-2}{2}} ,\\
        C_2(\theta,\mu) & \assign  \Big( 6 d \mu^{\frac{2}{p}\big(1-\frac{1}{\beta} \big)} \Lambda_{\beta} \|\theta\|_{\infty}^{\frac{2}{\beta}} \Big)^{\frac{p-2}{2}} ,
    \end{split}
\end{equation}
where $\eta \assign \frac{1}{2}+\frac{d}{2}(\frac{1}{2}-\frac{1}{p})<1,$ $\| \theta \|_{\infty} \assign \sup_{k \in \Z_0^d} |\theta_k| , $ and $\Lambda_{\beta} \assign \big(\sum_{l \in \Z^d} \lambda_l^{1-\beta} \big)^{\frac{1}{\beta}} < + \infty.$

To prove Theorem \ref{thm-average}, we first prove

\begin{proposition}\label{dissipation.1}
    For any $ p \in \big(2,\frac{2d}{d-2} \big),$ $\mu >0$, and initial data satisfying $\| u_0 \|_{L^2} \leq R $, we can find appropriate noise intensity $\kappa$ and coefficients $\{\theta_k\}_{k \in \Z_0^d}$, depending only on parameters $d, p, t_0, R, \mu$ and verifying
    \begin{equation}\label{condition.1}
        \max \{ C_1(\kappa,t_0,R),C_2(\theta,\mu) \} \leq \frac{1}{1+\mu (p-2) t_0 R^{p-2} } ,
    \end{equation}
    such that the solution $u(t)$ of SPDE \eqref{p-Laplace.3} satisfies
    \begin{equation}\label{dissipation.1.eq}
        \E \|u(t_{n+1}) \|_{L^2}^2 \leq \frac{\E \|u(t_n) \|_{L^2}^2}{\Big(1+\mu (p-2) t_0 \left(\E \|u(t_n) \|_{L^2}^2 \right)^{\frac{p-2}{2}} \!\Big)^\frac{2}{p-2} }, \quad \forall\, n \in \N.
    \end{equation}
\end{proposition}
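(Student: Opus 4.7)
The plan is to reduce the one-step estimate \eqref{dissipation.1.eq} to a base estimate on the window $[0,t_0]$ starting from a deterministic initial datum, and then to establish the base estimate by directly analyzing the mild formulation. Conditionally on $\mathcal{F}_{t_n}$, the shifted process $(u(t_n+s))_{s\in[0,t_0]}$ solves \eqref{p-Laplace.3} with (now frozen) initial datum $u(t_n)$ and time-shifted independent noise, so it suffices to prove that for any deterministic $v_0$ with $\|v_0\|_{L^2}\leq R$, the solution $v$ of \eqref{p-Laplace.3} starting from $v_0$ satisfies
\[\E\|v(t_0)\|_{L^2}^2 \leq f\bigl(\|v_0\|_{L^2}^2\bigr), \qquad f(x):=\frac{x}{\bigl(1+\mu(p-2)t_0\, x^{(p-2)/2}\bigr)^{2/(p-2)}}.\]
A direct computation gives $f'(x)=(1+\mu(p-2)t_0 x^{(p-2)/2})^{-p/(p-2)}$, so $f$ is positive, increasing and concave. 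Applying the base estimate conditionally and then Jensen's inequality to the concave $f$ then produces \eqref{dissipation.1.eq}.

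For the base estimate I apply Theorem \ref{mild form} to split
\[v(t_0) = e^{\kappa t_0 \Delta}v_0 + \int_0^{t_0} e^{\kappa(t_0-s)\Delta}\Delta_p v(s)\,ds + \sum_{k,i}\int_0^{t_0} e^{\kappa(t_0-s)\Delta}\xi_{k,i}\!\cdot\!\nabla v(s)\,dB_s^{k,i},\]
and bound $\E\|v(t_0)\|_{L^2}^2$ by $3$ times the sum of the $L^2$-norms squared of the three summands. The first piece is handled by a Fourier computation on zero-mean functions together with the elementary bound $e^{-y}\leq 2/y^2$, producing the $6/(4\pi^2\kappa t_0)^2$ term in $C_1$. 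The second piece combines Proposition \ref{pro.p-laplace}, namely $\|\Delta_p v\|_{W^{-1,p/(p-1)}}\leq c\|v\|_{W^{1,p}}^{p-1}$, with the heat-smoothing estimate of Lemma \ref{P3} applied in duality, a Hölder inequality in time, and the energy identity \eqref{energy identity}, which yields $\int_0^{t_0}\|v(s)\|_{W^{1,p}}^p\,ds\leq\tfrac12\|v_0\|_{L^2}^2\leq\tfrac12 R^2$; this produces the $R^{2-4/p}$ contribution in $C_1$.

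The crux is the stochastic integral. By the It\^o isometry and independence of the $B^{k,i}$,
\[\E\Big\|\sum_{k,i}\int_0^{t_0}e^{\kappa(t_0-s)\Delta}\xi_{k,i}\!\cdot\!\nabla v(s)\,dB_s^{k,i}\Big\|_{L^2}^2 = \sum_{k,i}\int_0^{t_0}\E\|e^{\kappa(t_0-s)\Delta}\xi_{k,i}\!\cdot\!\nabla v(s)\|_{L^2}^2\,ds.\]
Using the divergence-free structure of $\xi_{k,i}$ to rewrite $\xi_{k,i}\!\cdot\!\nabla=\div(\xi_{k,i}\,\cdot)$, one derivative is shifted onto the semigroup via Lemma \ref{P3}; then a Hölder summation over $k\in\Z^d_0$ with exponent $\beta>d/2+1$, using the summability $\sum_l\lambda_l^{1-\beta}=\Lambda_\beta^\beta<\infty$ and the uniform bound $|\theta_k|\leq\|\theta\|_\infty$, extracts the factor $\|\theta\|_\infty^{2/\beta}\Lambda_\beta$; the explicit $\kappa$ prefactor in $\xi_{k,i}$ together with another time integration produces the $\mu^{(2/p)(1-1/\beta)}$ weight. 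Assembling the three bounds gives an estimate of the form $\E\|v(t_0)\|_{L^2}^2\leq(\text{something involving }C_1\text{ and }C_2)\cdot\|v_0\|_{L^2}^2$; the hypothesis \eqref{condition.1} then forces this quantity to sit below $f(\|v_0\|_{L^2}^2)$, possibly via a two-regime split on $\|v_0\|_{L^2}$ versus a threshold, using Lemma \ref{P1} in the large-norm regime to convert a linear decay bound $1-\frac{2\mu t_0}{p-2}\|v_0\|^{p-2}$ into the required nonlinear envelope $f$.

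The main obstacle is the stochastic term: one must extract a dissipative contribution of rate precisely $\mu$, which requires a delicate balance between the heat-smoothing parameter $\kappa$ (which both amplifies and regularizes the fluctuation), the spread of $\theta$ to high wavenumbers (small $\|\theta\|_\infty$), and the summability exponent $\beta$ controlling the sum over Fourier modes. The concrete construction is to first choose $\kappa$ large enough that $C_1(\kappa,t_0,R)$ sits below the envelope $1/(1+\mu(p-2)t_0 R^{p-2})$, and then to choose $\theta$ concentrated at sufficiently high frequencies, with $\|\theta\|_{\ell^2}=1$, so that $\|\theta\|_\infty$ (and hence $C_2(\theta,\mu)$) is small enough to also fit under the envelope.
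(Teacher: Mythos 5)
Your high-level architecture matches the paper's (mild formulation, three-term splitting, heat smoothing for the Duhamel term, It\^o isometry plus a H\"older interpolation in frequency for the stochastic convolution, then ``$\kappa$ large first, $\theta$ spread to high modes second''), and the conditioning-plus-Jensen reduction to a deterministic initial datum is a legitimate variant of the paper's direct treatment of the interval $[t_n,t_{n+1}]$. However, there are two genuine gaps.

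First, you estimate the three summands at the single time $t_0$. The paper instead uses the monotonicity of $t\mapsto\|u(t)\|_{L^2}$ to bound $\|u(t_{n+1})\|_{L^2}$ by the time average $\frac1{t_0}\int_{t_n}^{t_{n+1}}\|u(r)\|_{L^2}\,dr$, and this is not cosmetic. For the Duhamel term, the pointwise-in-$r$ bound produces $\int_{t_n}^{r}(\kappa(r-\tau))^{-\eta}\|\nabla u(\tau)\|_{L^p}^{p-1}d\tau$ with $\eta=\frac12+\frac d2(\frac12-\frac1p)>\frac1p$, so H\"older in $\tau$ against $\|\nabla u\|_{L^p}^{p}$ fails (one would need $\eta p<1$); only after integrating in $r$ and applying Fubini does the singular kernel become integrable with the stated constant. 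For the stochastic convolution, integrating $e^{-2\kappa\lambda_l(r-\tau)}$ in the \emph{outer} variable $r$ is what produces the factor $\frac1{2\kappa\lambda_l}$ that cancels both the $\kappa$ in the normalization of $\xi_{k,i}$ and one power of $\lambda_l$; at a fixed terminal time this gain is unavailable, the bound grows with $\kappa$, and the scheme cannot close since $C_1$ requires $\kappa$ large.

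Second, and more fundamentally, you never introduce the dichotomy on the accumulated dissipation $\E\int_{t_n}^{t_{n+1}}\|\nabla u(s)\|_{L^p}^p\,ds$ versus $\mu t_0\big(\E\|u(t_n)\|_{L^2}^2\big)^{p/2}$, which is the engine of the whole proof. Your ``two-regime split on $\|v_0\|_{L^2}$ versus a threshold'' is the wrong dichotomy. In the large-dissipation case the energy identity plus Lemma \ref{P1} gives \eqref{dissipation.1.eq} outright, with no need for the mild formulation. In the small-dissipation case, the H\"older interpolation of the stochastic convolution produces a factor $I_1^{1/\alpha}$ with $I_1\lesssim d\,t_0^{(p-2)/p}\big(\E\int\|\nabla u\|_{L^p}^p\big)^{2/p}$, and it is precisely the small-dissipation hypothesis that converts this into $d\,\mu^{2/p}t_0\,\E\|u(t_n)\|_{L^2}^2$; this is the \emph{only} source of the exponent $\mu^{\frac2p(1-\frac1\beta)}$ in $C_2(\theta,\mu)$. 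Your attribution of that factor to ``the explicit $\kappa$ prefactor in $\xi_{k,i}$ together with another time integration'' is not correct: without the dissipation dichotomy, $\E\int\|\nabla u\|_{L^2}^2\,d\tau$ is only controlled by $\|u_0\|_{L^2}^2$ via the energy identity, which does not yield a contraction proportional to $\E\|u(t_n)\|_{L^2}^2$. As written, the argument for the stochastic term does not close.
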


For any fixed $n \in \N$, we will prove inequality \eqref{dissipation.1.eq} in the following two different cases:
\begin{itemize}
    \item Case 1:
    \begin{equation} \label{case1.assumption}
        \E \int_{t_{n}}^{t_{n+1}}\|\nabla u(s)\|_{L^p}^{p} \, d s \geq \mu t_{0} \big(\E\left\|u\left(t_{n}\right)\right\|_{L^2}^{2} \!\big)^{\frac{p}{2}} ;
    \end{equation}
    \item Case 2:
    \begin{equation} \label{case2.assumption}
        \E \int_{t_{n}}^{t_{n+1}}\|\nabla u(s)\|_{L^p}^{p} \, d s < \mu t_{0} \big(\E\left\|u\left(t_{n}\right)\right\|_{L^2}^{2}\! \big)^{\frac{p}{2}} .
    \end{equation}
\end{itemize}
In Case 1, inequality \eqref{dissipation.1.eq} follows immediately from the lower bound \eqref{case1.assumption} and the energy identity, cf. \eqref{energy identity.n} below. Case 2 is much more complicated; we will use the assumption \eqref{case2.assumption} to estimate the stochastic convolutional term, which is essential for estimating $\E \| u (t_{n+1}) \|_{L^2}^2$.

We first deal with Case 1. The solution $u(t)$ satisfies energy identity \eqref{energy identity}; as a result,
\begin{equation}\label{energy identity.n}
    \| u (t_{n+1}) \|_{L^2}^2 = \| u (t_n) \|_{L^2}^2 - 2\int_{t_n}^{t_{n+1}} \| \nabla u(s) \|_{L^p}^p \, d s .
\end{equation}
It implies that the estimate of $\| u (t_{n+1}) \|_{L^2}^2$ can be obtained by estimating $\int_{t_n}^{t_{n+1}} \| \nabla u(s) \|_{L^p}^p \, d s$,
and  the assumption \eqref{case1.assumption} gives a lower bound of its expectation. In this case, we can prove the inequality \eqref{dissipation.1.eq} without the assumptions in Proposition \ref{dissipation.1}.

\begin{proposition}\label{case.1}
    The inequality \eqref{dissipation.1.eq} holds in Case 1.
\end{proposition}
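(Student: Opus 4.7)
The plan is to reduce this directly to the deterministic Lemma \ref{P1} via the pathwise energy identity. Take expectation in \eqref{energy identity.n}, which gives
\[
\E \|u(t_{n+1})\|_{L^2}^2 = \E \|u(t_n)\|_{L^2}^2 - 2\, \E \int_{t_n}^{t_{n+1}} \|\nabla u(s)\|_{L^p}^p \, d s,
\]
and apply the Case 1 hypothesis \eqref{case1.assumption} to bound the dissipation term from below. Writing $x := \E \|u(t_n)\|_{L^2}^2$, this yields the elementary upper bound
\[
\E \|u(t_{n+1})\|_{L^2}^2 \leq x - 2 \mu t_0 \, x^{p/2}.
\]

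Next, I would show that this upper bound is itself dominated by the right-hand side of \eqref{dissipation.1.eq}. The case $x=0$ is trivial (then $u(t_n)=0$ a.s., and by the energy identity $u(t_{n+1})=0$ a.s.), so assume $x>0$ and divide through by $x$. The claim becomes
\[
1 - 2\mu t_0 \, x^{(p-2)/2} \leq \frac{1}{\bigl(1 + \mu (p-2) t_0 \, x^{(p-2)/2}\bigr)^{2/(p-2)}}.
\]
Setting $y := \mu (p-2) t_0 \, x^{(p-2)/2} > 0$, so that $2 \mu t_0 \, x^{(p-2)/2} = \tfrac{2y}{p-2}$, this is exactly the inequality supplied by Lemma \ref{P1}, and the desired bound \eqref{dissipation.1.eq} follows.

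There is no real obstacle here: this is the ``easy'' case, in which the deterministic energy dissipation term is already large enough to drive the decay, so no properties of the transport noise or of the stochastic convolution are needed. The entire role of the noise, and the more delicate heat-semigroup and Sobolev estimates involving $C_1, C_2$ from \eqref{C_1,C_2}, will only enter in the complementary Case 2, where \eqref{case1.assumption} fails and one must instead exploit \eqref{case2.assumption} through the mild formulation \eqref{mild form.eq}.
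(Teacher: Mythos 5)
Your argument is correct and is essentially identical to the paper's own proof: take expectation in the energy identity \eqref{energy identity.n}, insert the Case 1 lower bound \eqref{case1.assumption}, and conclude via Lemma \ref{P1} with the substitution $x=\mu(p-2)t_0\big(\E\|u(t_n)\|_{L^2}^2\big)^{(p-2)/2}$. The only (harmless) addition is your explicit treatment of the degenerate case $\E\|u(t_n)\|_{L^2}^2=0$.
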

\begin{proof}
    Taking expectation on both sides of \eqref{energy identity.n} and using assumption \eqref{case1.assumption}, we get
    \begin{equation*}
        \begin{split}
            \E \| u (t_{n+1}) \|_{L^2}^2 &= \E \| u (t_n) \|_{L^2}^2 - 2 \E \int_{t_n}^{t_{n+1}} \| \nabla u(s) \|_{L^p}^p \, d s \\
            & \leq \E \| u (t_n) \|_{L^2}^2 - 2 \mu t_{0} \big(\E\left\|u\left(t_{n}\right)\right\|_{L^2}^{2}\! \big)^{\frac{p}{2}} .
        \end{split}
    \end{equation*}
    Applying Lemma \ref{P1} with $x=\mu (p-2) t_0 \big(\E\left\|u\left(t_{n}\right)\right\|_{L^2}^{2} \! \big)^{\frac{p-2}{2}},$ we obtain inequality \eqref{dissipation.1.eq}.
\end{proof}

In Case 2, we will follow the idea of proof of \cite[Theorem 1.9]{FGL21c} and estimate $\|u(t_{n+1})\|_{L^2}$ by using the mild formulation of $u(t)$ and the semigroup method,
where the inequality \eqref{case2.assumption} is necessary to estimate the stochastic convolution term.

\begin{proposition}\label{case.2}
    Under the assumptions in Proposition \ref{dissipation.1}, inequality \eqref{dissipation.1.eq} holds in Case 2.
\end{proposition}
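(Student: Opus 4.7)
The plan is to start from the mild formulation given by Theorem \ref{mild form}, restarted at time $t_n$, so that
\[ u(t_{n+1}) = e^{\kappa t_0 \Delta} u(t_n) + \int_{t_n}^{t_{n+1}} e^{\kappa(t_{n+1}-s)\Delta} \Delta_p u(s)\,d s + \sum_{k,i} \int_{t_n}^{t_{n+1}} e^{\kappa(t_{n+1}-s)\Delta} \big(\xi_{k,i}\cdot \nabla u(s)\big)\,d B_s^{k,i} =: J_1 + J_2 + J_3. \]
Applying the elementary inequality $(a+b+c)^2 \le 3(a^2+b^2+c^2)$ and taking expectation yields $\E\|u(t_{n+1})\|_{L^2}^2 \le 3\sum_{j=1}^{3}\E\|J_j\|_{L^2}^2$. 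The goal is to show that the first two terms contribute at most $C_1(\kappa,t_0,R)^{2/(p-2)}\,\E\|u(t_n)\|_{L^2}^2$ and the third contributes at most $C_2(\theta,\mu)^{2/(p-2)}\,\E\|u(t_n)\|_{L^2}^2$; then the assumption \eqref{condition.1} together with Lemma \ref{P2} (or a direct comparison) will allow us to conclude \eqref{dissipation.1.eq}.

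For $J_1$, I would use the spectral gap: since $u$ has zero mean, Plancherel gives $\|e^{\kappa t_0 \Delta} u(t_n)\|_{L^2}^2 \le e^{-2\kappa t_0 \lambda_1}\|u(t_n)\|_{L^2}^2$, and the elementary estimate $e^{-2x}\le x^{-2}$ on $(0,\infty)$ furnishes the first summand of $C_1^{2/(p-2)}$, namely $(4\pi^2\kappa t_0)^{-2}$. For $J_2$, I would use Proposition \ref{pro.p-laplace} to obtain $\|\Delta_p u(s)\|_{W^{-1,p/(p-1)}} \le c\,\|u(s)\|_{W^{1,p}}^{p-1}$, and dualise Lemma \ref{P3} to get
\[ \|e^{\kappa(t_{n+1}-s)\Delta} \Delta_p u(s)\|_{L^2} \le C (\kappa(t_{n+1}-s))^{-\eta}\,\|\nabla u(s)\|_{L^p}^{p-1}. \]
Then I would apply Minkowski in $L^2(\Omega)$ followed by H\"older in time with exponents chosen so that the $\|\nabla u\|_{L^p}^{p}$ integrand appears to the power $1/p$, producing the factor $t_0^{-(\eta-1/p)}$; the deterministic energy identity \eqref{energy identity}, together with $\|u_0\|_{L^2}\le R$, gives $\int_{t_n}^{t_{n+1}}\|\nabla u(s)\|_{L^p}^{p}\,d s \le R^{2}/2$. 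Combining and multiplying by $\|u(t_n)\|_{L^2}^{4/p}\le R^{4/p}$ (arising from writing $R^2 = R^{2-4/p}\cdot R^{4/p}$) produces the $R^{2-4/p}$ factor in the second summand of $C_1^{2/(p-2)}$ times $\E\|u(t_n)\|_{L^2}^2$.

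The main obstacle is the stochastic term $J_3$, where the Case 2 hypothesis \eqref{case2.assumption} must be exploited. By the It\^o isometry,
\[ \E\|J_3\|_{L^2}^2 = \sum_{k,i} \E\int_{t_n}^{t_{n+1}} \big\|e^{\kappa(t_{n+1}-s)\Delta}\big(\xi_{k,i}\cdot \nabla u(s)\big)\big\|_{L^2}^2\,d s. \]
Expanding each $\xi_{k,i}\cdot\nabla u(s)$ in Fourier modes and using $\sum_i (a_{k,i}\cdot l)^2 \le |l|^2$ turns the integrand into $C_d\kappa\,|\theta_k|^2 \sum_l e^{-2\kappa(t_{n+1}-s)\lambda_{k+l}}\lambda_l |\hat u_l(s)|^2$. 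The plan is then to interpolate $|\theta_k|^{2}=|\theta_k|^{2/\beta}|\theta_k|^{2(1-1/\beta)}$, bound the first factor by $\|\theta\|_\infty^{2/\beta}$, and apply H\"older in the summation over $k$ with exponents $\beta$ and $\beta/(\beta-1)$; the sum $\sum_l \lambda_l^{1-\beta}$ converges exactly because $\beta>d/2+1$ and gives $\Lambda_\beta$. After a further H\"older in time with exponents matching $(1-1/\beta, 1/\beta)$, the $\|\nabla u\|_{L^p}^p$ integral factor appears, and the Case 2 bound \eqref{case2.assumption} replaces it by $\mu t_0 (\E\|u(t_n)\|_{L^2}^2)^{p/2}$, producing precisely the powers $\mu^{(2/p)(1-1/\beta)}$ and the factor $\E\|u(t_n)\|_{L^2}^2$ (after collecting one power of $Y_n$ from $Y_n^{p/2}$ and the remaining $Y_n^{1-2/p}$ bounded by $R^{2-4/p}$ or absorbed into the structure of the constants).

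Having established $\E\|u(t_{n+1})\|_{L^2}^2 \le \big(C_1^{2/(p-2)}+C_2^{2/(p-2)}\big)\E\|u(t_n)\|_{L^2}^2$, the final step is algebraic: apply \eqref{condition.1} together with $\E\|u(t_n)\|_{L^2}^2 \le R^{2}$ and conclude that each contribution is dominated by $\bigl(1+\mu(p-2)t_0(\E\|u(t_n)\|_{L^2}^2)^{(p-2)/2}\bigr)^{-2/(p-2)}\E\|u(t_n)\|_{L^2}^2$; Lemma \ref{P2} can be used to combine the two half-step estimates cleanly into the single inequality \eqref{dissipation.1.eq}. The delicate point throughout is ensuring that the smoothing exponents from Lemma \ref{P3} (the parameter $\eta$ and the summability exponent $\beta$) stay in the admissible range dictated by $p<2d/(d-2)$ and $\beta>d/2+1$, and that the H\"older balancing yields exactly the time dependence $t_0^{\eta-1/p}$ appearing in $C_1$.
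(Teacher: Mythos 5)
Your overall architecture (mild formulation on $[t_n,t_{n+1}]$, splitting into three terms, semigroup smoothing for the $\Delta_p$ term, H\"older interpolation with exponents $\alpha,\beta$ plus the Case 2 hypothesis for the stochastic convolution, then the algebraic comparison with \eqref{condition.1}) matches the paper's, but there is a genuine gap at the very first step: you evaluate the mild formula at the single time $t_{n+1}$ and estimate $\E\|J_j(t_{n+1})\|_{L^2}^2$. The paper instead first uses the monotonicity of $t\mapsto\|u(t)\|_{L^2}$ (from the energy identity) to write
\begin{equation*}
\|u(t_{n+1})\|_{L^2}\ \leq\ \frac{1}{t_0}\int_{t_n}^{t_{n+1}}\|u(r)\|_{L^2}\,dr ,
\end{equation*}
and only then decomposes $u(r)=v_1(r)+v_2(r)+v_3(r)$. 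This extra average over $r$ is not cosmetic; without it two of your three estimates break down. For $J_2$: dualising Lemma \ref{P3} gives the singular factor $(\kappa(t_{n+1}-s))^{-\eta}$, and to pair it with $\|\nabla u(s)\|_{L^p}^{p-1}\in L^{p/(p-1)}_s$ you need $(t_{n+1}-s)^{-\eta}\in L^p_s$, i.e.\ $\eta p<1$. But $\eta=\frac12+\frac d2(\frac12-\frac1p)$ gives, e.g., $\eta=1-\frac1p$ for $d=2$, so $\eta p=p-1>1$ and the singularity is not integrable to the required power. In the averaged version one applies Fubini and only needs $\int_\tau^{t_{n+1}}(\kappa(r-\tau))^{-\eta}\,dr<\infty$, i.e.\ $\eta<1$, which always holds.

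The failure is even more serious for $J_3$. After It\^o's isometry the integrand carries the factor $|\xi_{k,i}|^2\sim\kappa\,\theta_k^2$, and the only mechanism that cancels this large $\kappa$ is the integral of the heat kernel in the \emph{outer} variable, $\int_\tau^{t_{n+1}}e^{-2\kappa\lambda_l(r-\tau)}\lambda_l\,dr\leq\frac1{2\kappa}$ (and likewise $\int_\tau^{t_{n+1}}e^{-2\kappa\lambda_l(r-\tau)}\lambda_l^{2-\beta}\,dr\leq\frac{\lambda_l^{1-\beta}}{2\kappa}$), which is exactly what Fubini produces from the double integral $\frac1{t_0}\int_{t_n}^{t_{n+1}}\int_{t_n}^{r}(\cdots)\,d\tau\,dr$. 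At the fixed endpoint you only have the single $\tau$-integral, and in the $I_1$-type factor the exponential multiplies the $\tau$-dependent quantity $|\langle a_{k,i}e_k\cdot\nabla u(\tau),e_l\rangle|^2$, whose Parseval sum $\|\nabla u(\tau)\|_{L^2}^2$ is controlled only after integration in $\tau$ (via the energy identity and \eqref{case2.assumption}) and cannot be pulled out pointwise. Bounding the exponential by $1$ leaves you with $\E\|J_3\|_{L^2}^2\lesssim \kappa\,\mu^{2/p}t_0\,\E\|u(t_n)\|_{L^2}^2$, and since \eqref{condition.1} forces $\kappa$ to be large, this destroys the bound by $C_2(\theta,\mu)^{2/(p-2)}\E\|u(t_n)\|_{L^2}^2$, which must be independent of $\kappa$. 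So the missing idea is precisely the preliminary time-averaging step; once it is inserted, the rest of your outline (including the $\theta_k$-interpolation, the $\Lambda_\beta$ sum, and the final comparison — where Lemma \ref{P2} is actually not needed, a direct use of $\E\|u(t_n)\|_{L^2}^2\leq R^2$ suffices) goes through as in the paper.
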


Firstly, we give some notations that are frequently used in proving Proposition \ref{case.2}; for any $r \in [t_n, t_{n+1}]$, we define
\begin{equation}\label{mild solution's component}
    \begin{split}
        v_{1}(r) &:=e^{\kappa\left(r-t_{n}\right) \Delta} u\left(t_{n}\right)  , \\
        v_{2}(r) &:=\int_{t_{n}}^{r} e^{\kappa(r-\tau) \Delta} \Delta_p u(\tau) , \\
        v_{3}(r) &:=\sum_{k \in \Z^{d}} \sum_{i=1}^{d-1} \int_{t_{n}}^{r} e^{\kappa(r-\tau) \Delta}
        \xi_{k,  i} \cdot \nabla u(\tau) \,  d B_{\tau}^{k,  i} .
    \end{split}
\end{equation}
By Theorem \ref{mild form}, we know $u(r)=v_1(r)+v_2(r)+v_3(r)$ for $t\geq r_n$. Then by the decreasing property of $t \rightarrow \|u(t)\|_{L^2}$, it holds

\begin{equation}\label{case2.1}
    \| u(t_{n+1}) \|_{L^2} \leq \frac{1}{t_0} \int_{t_n}^{t_{n+1}} \|u(r)\|_{L^2} \, d r
    \leq \frac{1}{t_0} \int_{t_n}^{t_{n+1}} \big( \|v_1(r)\|_{L^2}+\|v_2(r)\|_{L^2}+\|v_3(r)\|_{L^2} \big) \, d r .
\end{equation}
The estimate of $\|v_1(r)\|_{L^2}$ is straightforward, thus we focus on the last two terms.
\begin{lemma}\label{v_2}
    If $p \in \big(2,\frac{2d}{d-2} \big)$, then it holds $\P$-a.s.
    \begin{equation}\label{v_2.eq}
        \frac{1}{t_0} \int_{t_n}^{t_{n+1}} \|v_2(r)\|_{L^2} \, d r \leq \frac{C_{0}}{\kappa^{\eta} t_{0}^{\eta-1/p}}
        R^{1-2/p} \left\|u\left(t_{n}\right)\right\|_{L^2} ,
    \end{equation}
    where $\eta =\frac{1}{2}+ \frac{d}{2}(\frac{1}{2}-\frac{1}{p})<1, $ and the constant $C_0$ depends only on parameters $p$ and $d$.
\end{lemma}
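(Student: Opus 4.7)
My plan is to exploit the divergence structure of the $p$-Laplace operator, $\Delta_p u = \div(|\nabla u|^{p-2}\nabla u)$, together with the smoothing properties of the heat semigroup. The key ingredient I will need is a dual version of Lemma \ref{P3}: for $F\in L^{p/(p-1)}(\T^d)$,
\[
\|e^{t\Delta}\div F\|_{L^2} \leq C t^{-\eta}\|F\|_{L^{p/(p-1)}},
\]
where $\eta = \frac12 + \frac{d}{2}(\frac12 - \frac1p)$. This follows by duality from Lemma \ref{P3} with exponent $p$, since $\div$ is the adjoint of $-\nabla$ and the heat semigroup is self-adjoint. Applying this with $F = |\nabla u(\tau)|^{p-2}\nabla u(\tau)$, so that $\|F\|_{L^{p/(p-1)}} = \|\nabla u(\tau)\|_{L^p}^{p-1}$, and using the rescaling $e^{\kappa s \Delta}$, I get the pointwise-in-time bound
\[
\|v_2(r)\|_{L^2} \leq C\kappa^{-\eta}\int_{t_n}^{r} (r-\tau)^{-\eta}\|\nabla u(\tau)\|_{L^p}^{p-1}\,d\tau.
\]
Note that the condition $\eta<1$, which is needed for all subsequent time integrals to converge, is exactly the restriction $p < 2d/(d-2)$ imposed in the statement.

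Next I would integrate in $r$ over $[t_n, t_{n+1}]$, divide by $t_0$, and apply Fubini's theorem to interchange the order of integration. The inner time integral $\int_\tau^{t_{n+1}}(r-\tau)^{-\eta}\,dr$ equals $(t_{n+1}-\tau)^{1-\eta}/(1-\eta) \leq t_0^{1-\eta}/(1-\eta)$, leaving me to estimate $\int_{t_n}^{t_{n+1}}\|\nabla u(\tau)\|_{L^p}^{p-1}\,d\tau$. For this remaining integral I would apply H\"older's inequality in time with exponents $p$ and $p/(p-1)$:
\[
\int_{t_n}^{t_{n+1}}\|\nabla u(\tau)\|_{L^p}^{p-1}\,d\tau \leq t_0^{1/p}\bigg(\int_{t_n}^{t_{n+1}}\|\nabla u(\tau)\|_{L^p}^p\,d\tau\bigg)^{(p-1)/p}.
\]
The energy identity \eqref{energy identity} applied between times $t_n$ and $t_{n+1}$ then bounds the dissipation integral by $\tfrac12\|u(t_n)\|_{L^2}^2$.

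Collecting these estimates yields an upper bound proportional to $\kappa^{-\eta}t_0^{-(\eta-1/p)}\|u(t_n)\|_{L^2}^{2(p-1)/p}$. To reach the form stated in \eqref{v_2.eq}, I would split $\|u(t_n)\|_{L^2}^{2-2/p} = \|u(t_n)\|_{L^2}^{1-2/p}\,\|u(t_n)\|_{L^2}$ and use the monotonicity of $t\mapsto\|u(t)\|_{L^2}$ guaranteed by the energy identity to bound the first factor by $\|u_0\|_{L^2}^{1-2/p}\leq R^{1-2/p}$. The constant $C_0 = C_0(p,d)$ then absorbs the numerical factors $C$, $(1-\eta)^{-1}$, and $2^{-(p-1)/p}$. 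The main technical point is the initial dual heat semigroup estimate with divergence; all subsequent steps are routine bookkeeping of exponents designed precisely so that the parameters $(\eta, 1/p, 1-2/p)$ reproduce the factors claimed in \eqref{v_2.eq}.
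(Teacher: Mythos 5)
Your proposal is correct and follows essentially the same route as the paper: the paper realizes your ``dual version of Lemma \ref{P3}'' by writing $\|v_2(r)\|_{L^2}$ as a supremum over $\|\phi\|_{L^2}=1$, moving $e^{\kappa(r-\tau)\Delta}$ onto $\phi$, and pairing $\|e^{\kappa(r-\tau)\Delta}\phi\|_{W^{1,p}}$ with $\|\Delta_p u(\tau)\|_{W^{-1,p/(p-1)}}\leq \|\nabla u(\tau)\|_{L^p}^{p-1}$, which is exactly your estimate for $e^{t\Delta}\div$ on $L^{p/(p-1)}$. The subsequent Fubini, time-H\"older with exponents $p$ and $p/(p-1)$, energy identity, and the split $\|u(t_n)\|_{L^2}^{2-2/p}=\|u(t_n)\|_{L^2}^{1-2/p}\|u(t_n)\|_{L^2}\leq R^{1-2/p}\|u(t_n)\|_{L^2}$ all match the paper's proof.
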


\begin{proof}
    For fixed $r \in [t_n, t_{n+1}], $ it holds $\P$-a.s.
    \begin{equation*}
        \begin{split}
            \left\|v_{2}(r)\right\|_{L^2} &=\sup _{\|\phi\|_{L^2}=1}\left\langle\int_{t_{n}}^{r} e^{\kappa(r-\tau) \Delta} \Delta_p u(\tau) \,  d \tau,  \phi\right\rangle \\
            & \leq \sup _{\|\phi\|_{L^2}=1} \int_{t_{n}}^{r} \big\langle e^{\kappa(r-\tau) \Delta }\Delta_p u(\tau),  \phi \big\rangle \,  d \tau \\
            & \leq \sup _{\|\phi\|_{L^2}=1} \int_{t_{n}}^{r} \| e^{\kappa(r-\tau) \Delta} \phi \|_{W^{1,  p}}\left\|\Delta_{p} u(\tau)\right\|_{W^{-1,  \frac{p}{p-1}}} \,  d \tau .
        \end{split}
    \end{equation*}
    Applying Lemma \ref{P3} and Proposition \ref{pro.p-laplace}, we have
    \begin{equation*}
        \begin{split}
            \left\|v_{2}(r)\right\|_{L^2} & \leq C \sup _{\|\phi\|_{L^2}=1} \int_{t_{n}}^{r}(\kappa(r-\tau))^{-\eta}
            \|\phi\|_{L^2} \left\|\Delta_{p} u(\tau)\right\|_{W^{-1,  \frac{p}{p-1}}} \,  d \tau \\
            & \leq C \int_{t_{n}}^{r}(\kappa(r-\tau))^{-\eta}\|\nabla u(\tau)\|_{L^p}^{p-1} \,  d \tau;
        \end{split}
    \end{equation*}
    integrating in $r\in [t_n, t_{n+1}]$ yields
    \begin{equation*}
        \begin{split}
            \frac{1}{t_{0}} \int_{t_{n}}^{t_{n+1}}\left\|v_{2}(r)\right\|_{L^2} \, d r & \leq \frac{C}{t_{0}} \int_{t_{n}}^{t_{n+1}} \!\! \int_{t_{n}}^{r}(\kappa(r-\tau))^{-\eta}\|\nabla u(\tau)\|_{L^p}^{p-1} \,  d \tau \, d r \\
            & =\frac{C}{t_{0}} \int_{t_{n}}^{t_{n+1}}\|\nabla u(\tau)\|_{L^p}^{p-1} \int_{\tau}^{t_{n+1}}(\kappa(r-\tau))^{-\eta} \, d r \,  d \tau \\
            & \leq \frac{C}{\kappa^{\eta} t_{0}^{\eta} (1-\eta)}  \int_{t_n}^{t_{n+1}} \| \nabla u(\tau) \|_{L^p}^{p-1} \,  d \tau .
        \end{split}
    \end{equation*}
    Let $C_0=\frac{C}{1-\eta}$, by H\"older's inequality and energy identity \eqref{energy identity.n}, we have
    \[ \frac{1}{t_{0}} \int_{t_{n}}^{t_{n+1}}\left\|v_{2}(r)\right\|_{L^2} \, d r \leq
    \frac{C_{0}}{\kappa^{\eta} t_{0}^{\eta-1/p}} \left\|u\left(t_{n}\right)\right\|_{L^2}^{ 2(p-1)/p} .\]
    Recalling that $\| u(t_n)\|_{L^2} \leq \|u_0\|_{L^2} \leq R, $ we get the inequality \eqref{v_2.eq}.
\end{proof}

Next we deal with the most difficult term $v_3$ for which we will need \eqref{case2.assumption}.

\begin{lemma}\label{v_3}
    Let $\frac{1}{\alpha}+ \frac{1}{\beta}=1 , \beta > \frac{d}{2} +1$; under the assumption \eqref{case2.assumption}, we have
    \begin{equation}\label{v_3.eq}
        \mathbb{E}\left(\frac{1}{t_0} \int_{t_{n}}^{t_{n+1}}\left\|v_{3}(r)\right\|_{L^2} d r\right)^{2} \leq d \mu^{\frac{2\alpha}{p}}
        \Lambda_{\beta} \|\theta\|_{\infty}^{\frac{2}{\beta}} \, \E \left\|u\left(t_{n}\right)\right\|_{L^2}^{2} ,
    \end{equation}
    where $\| \theta \|_{\infty} \assign \sup_{k \in \Z_0^d} |\theta_k| , $
    and $\Lambda_{\beta} \assign \big(\sum_{l \in \Z^d} \lambda_l^{1-\beta} \big)^{\frac{1}{\beta}} < + \infty.$
\end{lemma}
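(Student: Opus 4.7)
The plan is to bound $\E\bigl(\tfrac{1}{t_0}\int_{t_n}^{t_{n+1}}\|v_3(r)\|_{L^2}\,dr\bigr)^2$ through a chain of standard reductions followed by two carefully balanced H\"older inequalities and the Case 2 assumption \eqref{case2.assumption}. First, I would use Cauchy--Schwarz in time to pass from the $L^1$-average of $\|v_3\|_{L^2}$ to its $L^2$-average, i.e.
\[\Bigl(\tfrac{1}{t_0}\!\int_{t_n}^{t_{n+1}}\!\|v_3(r)\|_{L^2}\,dr\Bigr)^2\leq \tfrac{1}{t_0}\!\int_{t_n}^{t_{n+1}}\!\|v_3(r)\|_{L^2}^2\,dr.\]
Taking expectation, It\^o's isometry (using the independence of $\{B^{k,i}\}$) followed by Fubini and the Fourier-diagonal identity $\int_\tau^{t_{n+1}}e^{-2\kappa\lambda_l(r-\tau)}\,dr\leq(2\kappa\lambda_l)^{-1}$ reduces the task to controlling
\[\tfrac{1}{2\kappa t_0}\sum_{k,i}\E\int_{t_n}^{t_{n+1}}\sum_l\lambda_l^{-1}\bigl|\widehat{\xi_{k,i}\cdot\nabla u(\tau)}_l\bigr|^2 \,d\tau.\]

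Second, I would apply H\"older on the $l$-sum with conjugate exponents $\beta$ and $\alpha$, splitting $\lambda_l^{-1}=\lambda_l^{(1-\beta)/\beta}\cdot\lambda_l^{-1/\beta}$, which immediately produces the factor $\Lambda_\beta$:
\[\sum_l\lambda_l^{-1}|\hat h_l|^2\leq\Lambda_\beta\Bigl(\sum_l\lambda_l^{1-\alpha}|\hat h_l|^{2\alpha}\Bigr)^{1/\alpha},\quad h=\xi_{k,i}\cdot\nabla u.\]
To control the inner sum I would use the uniform Fourier bound $|\hat h_l|\leq\|\xi_{k,i}\cdot\nabla u\|_{L^1}\leq\|\xi_{k,i}\|_{L^{p/(p-1)}}\|\nabla u\|_{L^p}\leq 2\sqrt{C_d\kappa}\,\theta_k\|\nabla u(\tau)\|_{L^p}$, combined with Parseval on a residual $|\hat h_l|^2$, to produce a factor of the form $\kappa^\alpha\theta_k^{2\alpha}\|\nabla u(\tau)\|_{L^p}^{2\alpha}$.

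Third, the sum over $(k,i)$ and the time integration are handled jointly. To extract $\|\theta\|_\infty^{2/\beta}$ I would split $\theta_k^2=\theta_k^{2/\beta}\cdot\theta_k^{2/\alpha}$, bound the first factor by $\|\theta\|_\infty^{2/\beta}$, and apply a second H\"older with exponents $\alpha,\beta$ on the $k$-sum (invoking $\|\theta\|_{\ell^2}=1$). The residual quantity is $\E\int_{t_n}^{t_{n+1}}\|\nabla u(\tau)\|_{L^p}^{2\alpha}\,d\tau$, which I would bound by H\"older in $\tau$ (valid because the hypotheses $p<2d/(d-2)$ and $\beta>d/2+1$ force $2\alpha/p\leq 1$) together with Jensen's inequality; then \eqref{case2.assumption} gives the factor $\mu^{2\alpha/p}$. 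The remaining dimensional constants combine via $(d-1)C_d=d$ to produce the prefactor $d$, and the $(2\kappa t_0)^{-1}$ cancels against the $\kappa^\alpha \cdot t_0$ contributed above.

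The crux of the argument is the bookkeeping of exponents: the $1/\alpha$ power introduced by the first H\"older must be kept unevaluated until the time integral has been carried out, so that the quantity substituted into \eqref{case2.assumption} is $\|\nabla u\|_{L^p}^{2\alpha}$ rather than $\|\nabla u\|_{L^p}^{2}$. A premature Jensen step of the form $\E X^{1/\alpha}\leq(\E X)^{1/\alpha}$ would collapse the sharp exponent $\mu^{2\alpha/p}$ back to the weaker $\mu^{2/p}$ and destroy the main point of the estimate; I expect this exponent-threading to be the principal obstacle.
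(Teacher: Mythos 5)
Your skeleton (It\^o isometry, a H\"older inequality with exponents $\alpha,\beta$ weighted by powers of $\lambda_l$ to produce $\Lambda_\beta$, and the Case~2 assumption to produce the $\mu$-factor) is in the right spirit, but the way you distribute the two H\"older factors loses both essential features of the estimate. First, the factor $\|\theta\|_\infty^{2/\beta}$: after your pointwise bound on the inner $l$-sum, the $(k,i)$-sum reduces to $\sum_k\theta_k^2$ times a $k$-independent quantity, and your proposed split $\theta_k^2=\theta_k^{2/\beta}\cdot\theta_k^{2/\alpha}$ leaves $\sum_k\theta_k^{2/\alpha}$, which is \emph{not} finite for a general unit vector $\theta\in\ell^2$ (since $2/\alpha<2$); the ``second H\"older on the $k$-sum'' cannot rescue this because the complementary factor is $k$-independent, so its $\beta$-sum is $\sum_k 1=\infty$. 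The paper obtains $\|\theta\|_\infty^{2/\beta}$ by a different mechanism: in the $\beta$-weighted factor $I_2$ it bounds $\theta_k^2\le\|\theta\|_\infty^2$ outright and then sums $|\langle e_k u,e_l\rangle|^2$ \emph{over $k$} by Bessel/Parseval, after an integration by parts that converts $\nabla u$ into a factor $(a_{k,i}\cdot l)$ absorbed by one power of $\lambda_l$. Without the $\|\theta\|_\infty^{2/\beta}$ factor the lemma is useless for the paper's purpose: $C_2(\theta,\mu)$ must be made small for large $\mu$ precisely by spreading $\theta$ over many modes so that $\|\theta\|_\infty\to0$.

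Second, the $\mu$-exponent. Because you apply H\"older only on the $l$-sum, pointwise in $(k,i,\tau,\omega)$, both resulting factors still carry $\nabla u$, and the $1/\alpha$ root sits inside the time integral and expectation; whichever order you then integrate, the exponent collapses to $\mu^{2/p}$. Your proposed fix --- carrying $\|\nabla u\|_{L^p}^{2\alpha}$ through the time integral before taking the root --- requires $2\alpha\le p$ for the H\"older step against \eqref{case2.assumption}, and this is false in general: for $d=2$, $\beta$ slightly above $2$ gives $\alpha$ close to $2$, while $p$ may be close to $2$, so $2\alpha/p>1$. The paper avoids all of this by applying H\"older to the full quantity $\E\sum_{k,l,i}\iint(\cdots)$, so that the $\alpha$-factor $I_1$ alone uses \eqref{case2.assumption} (via $\|\nabla u\|_{L^2}^2\le t_0^{(p-2)/p}(\int\|\nabla u\|_{L^p}^p)^{2/p}$, no power higher than $p$ ever appears), while the $\beta$-factor $I_2$ is bounded by $\|u(t_n)\|_{L^2}^2$ with no gradient and no $\mu$ at all; the $1/\alpha$ root of $I_1$ then yields $\mu^{2/(\alpha p)}=\mu^{\frac2p(1-\frac1\beta)}$, which is the exponent actually used in the definition of $C_2$ (the ``$2\alpha/p$'' in the displayed statement is a typo, so the ``sharp exponent'' you are threading toward is not the one the argument needs or delivers).
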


\begin{proof}
    By Jensen's inequality and It\^o's isometry, it holds
    \begin{equation*}
        \begin{split}
            \E\left(\frac{1}{t_{0}} \int_{t_{n}}^{t_{n+1}}\left\|v_{3}(r)\right\|_{L^2} d r\right)^{2} & \leq \frac{1}{t_{0}} \int_{t_{n}}^{t_{n+1}} \E\left\|v_{3}(r)\right\|_{L^2}^{2} \, d r \\
            &=\frac{1}{t_{0}} \E \int_{t_{n}}^{t_{n+1}} \sum_{k \in \Z^{d}} \sum_{i=1}^{d-1} \int_{t_{n}}^{r} \left\|e^{\kappa(r-\tau) \Delta} \xi_{k,  i} \cdot \nabla u(\tau)\right\|_{L^2}^{2} \,  d \tau \, d r \\
            &=\frac{1}{t_{0}} \E \sum_{i=1}^{d-1} \sum_{k, l \in \Z^{d}} \int_{t_{n}}^{t_{n+1}}  \!\! \int_{t_{n}}^{r} e^{-2 \kappa \lambda_{l}(r-\tau)} \left| \left\langle \xi_{k,  i} \cdot \nabla u(\tau),  e_{l}\right\rangle \right|^{2} \,  d \tau \, d r .
        \end{split}
    \end{equation*}
    By the H\"older inequality with exponents $\frac{1}{\alpha}+\frac{1}{\beta}=1$, we get
    \begin{equation}\label{v_3.1}
        \begin{split}
            \E & \left(\frac{1}{t_{0}} \int_{t_{n}}^{t_{n+1}}\left\|v_{3}(r)\right\|_{L^2} d r\right)^{2} \\
            & \leq \frac{1}{t_0} \E \sum_{i=1}^{d-1} \sum_{k, l \in \Z^{d}} \int_{t_{n}}^{t_{n+1}}  \!\! \int_{t_{n}}^{r} \left(e^{-2 \kappa \lambda_{l}(r-\tau)} \lambda_l \left| \left\langle a_{k,  i} e_k \cdot \nabla u(\tau),  e_{l}\right\rangle \right|^{2}\right)^{\frac{1}{\alpha}} \\
            & \hskip70pt \times \left(e^{-2 \kappa \lambda_{l}(r-\tau)} \lambda_l^{1-\beta} \left| \left\langle a_{k,  i} e_k \cdot \nabla u(\tau),  e_{l}\right\rangle \right|^{2}\right)^{\frac{1}{\beta}} \, d \tau \, d r \\
            & \leq\frac{1}{t_0} I_1^{\frac{1}{\alpha}} I_2^{\frac{1}{\beta}} ,
        \end{split}
    \end{equation}
    where $I_1$ and $I_2$ are defined as below:
    \begin{equation*}
        \begin{split}
            I_1 &\assign \E \sum_{k \in \Z^{d}} \sum_{i=1}^{d-1} \sum_{l \in \Z^{d}} \int_{t_{n}}^{t_{n+1}}  \!\! \int_{t_{n}}^{r} e^{-2 \kappa \lambda_{l}(r-\tau)}
            \lambda_l \left| \left\langle\xi_{k,  i} \cdot \nabla u(\tau),  e_{l}\right\rangle \right|^{2} \,  d \tau \, d r , \\
            I_2 &\assign \E \sum_{k \in \Z^{d}} \sum_{i=1}^{d-1} \sum_{l \in \Z^{d}} \int_{t_{n}}^{t_{n+1}}  \!\! \int_{t_{n}}^{r} e^{-2 \kappa \lambda_{l}(r-\tau)}
            \lambda_l^{1-\beta} \left| \left\langle\xi_{k,  i} \cdot \nabla u(\tau),  e_{l}\right\rangle \right|^{2} \,  d \tau \, d r .
        \end{split}
    \end{equation*}
    Recalling the definition of the vector fields $\{\xi_{k,i}\}$ and using the Fubini theorem, it holds
    \begin{equation*}
        \begin{split}
            I_1 & =\frac{2 \kappa d}{d-1} \E \sum_{k \in \Z^{d}} \theta_k^2 \sum_{i=1}^{d-1} \sum_{l \in \Z^{d}} \int_{t_{n}}^{t_{n+1}}  \!\! \int_{\tau}^{t_{n+1}} e^{-2 \kappa \lambda_{l}(r-\tau)}
            \lambda_l \left| \left\langle a_{k,  i} e_k \cdot \nabla u(\tau),  e_{l}\right\rangle \right|^{2} \, d r \,  d \tau , \\
            I_2 & =\frac{2 \kappa d}{d-1} \E \sum_{k \in \Z^{d}} \theta_k^2 \sum_{i=1}^{d-1} \sum_{l \in \Z^{d}} \int_{t_{n}}^{t_{n+1}}  \!\! \int_{\tau}^{t_{n+1}} e^{-2 \kappa \lambda_{l}(r-\tau)}
            \lambda_l^{1-\beta} \left| \left\langle a_{k,  i} e_k \cdot \nabla u(\tau),  e_{l}\right\rangle \right|^{2}  \, d r \,  d \tau .
        \end{split}
    \end{equation*}

    We estimate $I_1$ and $I_2$ separately. Firstly,
    \begin{equation*}
        \begin{split}
            I_1 & = \frac{2 \kappa d}{d-1} \E \sum_{k \in \Z^{d}} \theta_k^2  \sum_{i=1}^{d-1} \sum_{l \in \Z^{d}} \int_{t_{n}}^{t_{n+1}} \left| \left\langle a_{k,  i} e_k \cdot \nabla u(\tau),  e_{l}\right\rangle \right|^{2}
             \int_{\tau}^{t_{n+1}} e^{-2 \kappa \lambda_{l}(r-\tau)} \lambda_l \, d r \,  d \tau  \\
            & \leq \frac{d}{d-1} \E \sum_{k \in \Z^{d}} \theta_k^2  \sum_{i=1}^{d-1} \int_{t_{n}}^{t_{n+1}} \sum_{l \in \Z^{d}} \left| \left\langle a_{k,  i} e_k \cdot \nabla u(\tau),  e_{l}\right\rangle \right|^{2} \,  d \tau.
        \end{split}
    \end{equation*}
    By Parseval's identity, we get
    \[\sum_{l \in \Z^{d}} \left| \left\langle a_{k,  i} e_k \cdot \nabla u(\tau),  e_{l} \right\rangle \right|^{2} = \| a_{k,  i} e_k \cdot \nabla u(\tau) \|_{L^2}^2 \leq \| \nabla u(\tau) \|_{L^2}^2 \]
    for all $k \in \Z^d, i=1, 2, \dots, d-1$ and all $\tau \in [t_n, t_{n+1}].$ Recalling that $\sum_{k \in \Z_0^d} \theta_k^2 =1,$ so
    \begin{equation*}
        I_1 \leq d \, \E \int_{t_{n}}^{t_{n+1}} \| \nabla u(\tau) \|_{L^2}^2 \,  d \tau
        \leq \, d t_0^{\frac{p-2}{p}} \left( \E \int_{t_{n}}^{t_{n+1}} \|  \nabla u(\tau) \|_{L^p}^p \,  d \tau \right)^{\frac{2}{p}} .
    \end{equation*}
    By the assumption \eqref{case2.assumption}, it holds
    \begin{equation}\label{v_3.2}
        I_1 \leq d \mu^{\frac{2}{p}} t_0 \E \| u(t_n) \|_{L^2}^2 .
    \end{equation}

    Now we turn to estimate $I_2.$ Recalling that $\{a_{k,i} e_k\}$ are divergence free, by integration by parts, we have
    \begin{equation*}
        \left| \left\langle a_{k,  i} e_k \cdot \nabla u(\tau),  e_{l}\right\rangle \right|^{2}
        =\left| \left\langle e_k u(\tau),  a_{k,  i} \cdot \nabla e_{l}\right\rangle \right|^{2}
        = (2\pi)^2 (a_{k, i} \cdot l)^2 \left| \left\langle e_k u(\tau),  e_{l}\right\rangle \right|^{2}
    \end{equation*}
    for all $k \in \Z^d, i=1, 2, \dots, d-1$ and all $\tau \in [t_n, t_{n+1}].$  So
    \begin{equation*}
        \begin{split}
            I_2 & \leq \frac{2 \kappa d}{d-1} \E \sum_{k \in \Z^{d}} \theta_k^2 \sum_{i=1}^{d-1} \sum_{l \in \Z^{d}} \int_{t_{n}}^{t_{n+1}}  \!\! \int_{\tau}^{t_{n+1}} e^{-2 \kappa \lambda_{l}(r-\tau)}
            \lambda_l^{1-\beta} (2\pi)^2 (a_{k, i} \cdot l)^2 \left| \left\langle e_k u(\tau),  e_{l}\right\rangle \right|^{2}  \, d r \,  d \tau \\
            & \leq \frac{2 \kappa d}{d-1} \E \sum_{k \in \Z^{d}} \theta_k^2 \sum_{i=1}^{d-1} \sum_{l \in \Z^{d}} \int_{t_{n}}^{t_{n+1}} \left| \left\langle e_k u(\tau),  e_{l}\right\rangle \right|^{2}
            \int_{\tau}^{t_{n+1}} e^{-2 \kappa \lambda_{l}(r-\tau)} \lambda_l^{2-\beta}  \, d r \,  d \tau \\
            & \leq d \, \E \sum_{k \in \Z^{d}} \theta_k^2 \sum_{l \in \Z^{d}} \lambda_l^{1-\beta}  \int_{t_{n}}^{t_{n+1}} \left| \left\langle e_k u(\tau),  e_{l}\right\rangle \right|^{2} \,  d \tau .
        \end{split}
    \end{equation*}
    By the Bessel inequality, we have
    \begin{equation}\label{v_3.3}
        \begin{split}
            I_2 & \leq d \| \theta \|_{\infty}^2 \E \sum_{l \in \Z^{d}} \lambda_l^{1-\beta}  \int_{t_{n}}^{t_{n+1}} \sum_{k \in \Z^{d}} \left| \left\langle e_k u(\tau),  e_{l}\right\rangle \right|^{2} \,  d \tau \\
            & \leq d \| \theta \|_{\infty}^2 \E \sum_{l \in \Z^{d}} \lambda_l^{1-\beta}  \int_{t_{n}}^{t_{n+1}} \| u(\tau) e_l \|_{L^2}^2 \,  d \tau \\
            & \leq d \| \theta \|_{\infty}^2  \Lambda_{\beta}^{\beta} t_{0}\, \E \| u(t_{n}) \|_{L^2}^2 .
        \end{split}
    \end{equation}
    Combining the estimate \eqref{v_3.1}, \eqref{v_3.2}, and \eqref{v_3.3}, we arrive at the inequality \eqref{v_3.eq}.
\end{proof}

We have estimated the last two terms of the inequality \eqref{case2.1} respectively in Lemmas \ref{v_2} and \ref{v_3}; now we are ready to prove Proposition \ref{case.2}.

\begin{proof}[Proof of Proposition \ref{case.2}]
    By the property of heat semigroup, we have
    \begin{equation}\label{v_1}
        \begin{split}
            \frac{1}{t_{0}} \int_{t_{n}}^{t_{n+1}}\left\|v_{1}(r)\right\|_{L^2} \, d r & \leq \frac{1}{t_{0}} \int_{t_{n}}^{t_{n+1}}
            \exp \left\{-4 \pi^{2} \kappa\left(r-t_{n}\right)\right\}\left\|{u}\left(t_{n}\right)\right\|_{L^2} \, d r \\
            &=\frac{1-\exp \left\{-4 \pi^{2} \kappa t_{0}\right\}}{4 \pi^{2} \kappa t_{0}} \left\|u\left(t_{n}\right)\right\|_{L^2}
            \leq \frac{1}{4 \pi^2 \kappa t_{0}} \left\|u\left(t_{n}\right)\right\|_{L^2} .
        \end{split}
    \end{equation}
    By \eqref{case2.1} and the Cauchy inequality, $\| u(t_{n+1})\|_{L^2}^2$ is dominated by
    \[ 3 \left\{  \left( \frac{1}{t_0} \int_{t_n}^{t_{n+1}} \|v_1(r)\|_{L^2} \, d r \right)^2
            + \left( \frac{1}{t_0} \int_{t_n}^{t_{n+1}} \|v_2(r)\|_{L^2} \, d r \right)^2
            + \left( \frac{1}{t_0} \int_{t_n}^{t_{n+1}} \|v_3(r)\|_{L^2} \, d r \right)^2 \right\} . \]
    Taking expectation and applying Lemmas \ref{v_2} and \ref{v_3}, we get
    \begin{equation}\label{case2.2}
        \begin{split}
            \E \| u(t_{n+1}) \|_{L^2}^2 & \leq \frac{1}{2} \left( C_1(\kappa,t_0,R)^{\frac{2}{p-2}} + C_2(\theta,\mu)^{\frac{2}{p-2}} \right)  \E \| u({t_n}) \|_{L^2}^2 \\
            & \leq \max \{ C_1(\kappa,t_0,R),C_2(\theta,\mu) \}^{\frac{2}{p-2}} \E \| u({t_n}) \|_{L^2}^2 .
        \end{split}
    \end{equation}
    By the condition \eqref{condition.1} and recalling $\| u(t_n) \|_{L^2} \leq \| u_0 \|_{L^2}  \leq R ,$ we get
    \begin{equation*}\label{case2.3}
        \E \|u(t_{n+1}) \|_{L^2}^2  \leq \frac{\E \|u(t_n) \|_{L^2}^2}{\left(1+\mu (p-2) t_0 R^{p-2}\right)^\frac{2}{p-2} } \leq \frac{\E \|u(t_n) \|_{L^2}^2}{\Big(1+\mu (p-2) t_0 \left(\E \|u(t_n) \|_{L^2}^2 \right)^{\frac{p-2}{2}}\Big)^\frac{2}{p-2} } .
    \end{equation*}
    Therefore, inequality \eqref{dissipation.1.eq} holds also in Case 2.
\end{proof}

To prove Theorem \ref{thm-average}, we will choose appropriate $t_0$, $\mu',$ and use Lemma \ref{P2} to iterate over the estimates in Proposition \ref{dissipation.1}, finally we get the estimate on $\|u(t)\|_{L^2}$ for any $t \geq 1$.

\begin{proof}[Proof of Theorem \ref{thm-average}]
    Given any $\mu>0,$ we choose $0<t_0<1$ and $\mu'$ satisfying $\mu'(1-t_0)>\mu.$ For any $t \geq 1$, there exists $n \in \N$ such that $nt_0= t_n \leq t < t_{n+1} =(n+1)t_0 ,$ then
    \[\mu' \frac{t_n}{t} \geq \mu' \left(1-\frac{t_0}{t}\right) \geq \mu'(1-t_0) \geq \mu .\]
    Applying Proposition \ref{dissipation.1} with $t_0$ and $\mu'$, we obtain
    \[ \E \|u(t_m) \|_{L^2}^2 \leq \frac{\E \|u(t_{m-1}) \|_{L^2}^2}{\Big(1+ 2 \mu'(p-2) t_0 \left( \E \|u(t_{m-1}) \|_{L^2}^2 \right)^{\frac{p-2}{2}} \!\Big)^\frac{2}{p-2}} , \quad m=1,2,\dots,n .\]
    Applying Lemma \ref{P2} with $x=\E \|u(t_{n-2}) \|_{L^2}^2$, $y=\E \|u(t_{n-1}) \|_{L^2}^2$, $z=\E \|u(t_n) \|_{L^2}^2$, and $a=b=\mu'(p-2)t_0$, we have
    \[ \E \|u(t_n) \|_{L^2}^2 \leq \frac{\E \|u(t_{n-2}) \|_{L^2}^2}{\Big(1+ 2 \mu'(p-2) t_0 \left( \E \|u(t_{n-2}) \|_{L^2}^2 \right)^{\frac{p-2}{2}} \! \Big)^\frac{2}{p-2}} .\]
    Continuing in this way, we finally get
    \begin{equation*}
        \E \|u(t_n) \|_{L^2}^2 \leq \dots \leq \frac{\|u_0 \|_{L^2}^2}{\big(1+\mu' (p-2) n t_0 \|u_0 \|_{L^2}^{p-2} \big)^\frac{2}{p-2} } .
    \end{equation*}
    By decreasing property of $t\to \|u(t)\|_{L^2}$, we have
    \begin{equation*}
        \begin{split}
            \E \|u(t) \|_{L^2}^2 &\leq \E \|u(t_n) \|_{L^2}^2 \leq \frac{\|u_0 \|_{L^2}^2}{\big(1+\mu' \times \frac{t_n}{t} (p-2) t \|u_0 \|_{L^2}^{p-2} \big)^\frac{2}{p-2} } \\
            & \leq \frac{\|u_0 \|_{L^2}^2}{\big(1+\mu (p-2) t \|u_0 \|_{L^2}^{p-2} \big)^\frac{2}{p-2} } .
        \end{split}
    \end{equation*}
    So for any $t \geq 1$, the inequality \eqref{thm-average.eq} holds.
\end{proof}

From the previous proof and choice of the noise intensity $\kappa$ and coefficient $\{\theta_k\}_{k \in \Z_0^d}$ in Proposition \ref{dissipation.1},
we can easily conclude that Theorem \ref{thm-average} holds when the intensity $\kappa$ and coefficient $\{\theta_k\}_{k \in \Z_0^d}$ satisfy
\begin{equation}\label{thm-average.condition}
    \max \left\{ C_1(\kappa,t_0,R),C_2\left(\theta,\frac{\mu}{1-t_0}\right) \right\} \times \left(1+ \frac{\mu t_0}{1-t_0} (p-2) R^{p-2}\right) < 1
\end{equation}
for some $t_0 \in (0,1),$ where $C_1(\kappa,t_0,R)$ and $C_2(\theta,\mu)$ are defined in \eqref{C_1,C_2}.

\section{Almost sure dissipation enhancement}\label{sec:dissipation.2}

This section deals with the dissipation enhancement in $\P$-a.s. sense. For this purpose,
we set $t_0=1$  and choose time-dependent coefficients $\{\theta_{k,i}\}$ which are step functions of time $t$ as below:
\begin{equation}\label{noise.3}
    \theta_{k} (t) \assign  \theta_{k,n} , \quad  t\in [n,n+1) .
\end{equation}
For any fixed $n \in \N,$ the parameters $\theta^{(n)} \assign \{ \theta_{k,n} \}_{k \in \Z_0^d}$ satisfy the requirements in Section \ref{subsec:noise setting}. We consider the following SPDE:
\begin{equation}\label{p-Laplace.4}
    d u = \Delta_p u \, d t + \kappa \Delta u \, d t +
    \sum_{k \in \Z^d} \sum_{i=1}^{d-1} \xi_{k, i} (x,t) \cdot \nabla u \, d B_t^{k, i} ,
\end{equation}
where the vector fields $\{\xi_{k,i}(x,t)\}$ are defined as below:
\begin{equation}\label{xi.1}
    \xi_{k, i}(x,t) =\left\{\begin{array}{ll}
        2 \sqrt{C_{d} \kappa}\, \theta_{k}(t) {\rm Re}(\sigma_{k, i}(x)), & \quad k \in \Z_{+}^d, \\
        2 \sqrt{C_{d} \kappa}\, \theta_{k}(t) {\rm Im}(\sigma_{k, i}(x)), & \quad k \in \Z_{-}^d,
    \end{array} \right.
\end{equation}
with $\{\sigma_{k,i}\}$ being defined in \eqref{sigma}. Because $\theta_k(t)$ is a step function, for any $n \in \N$,
the variational solution $u(t)$ of SPDE \eqref{p-Laplace.4} exists and is unique in each interval $[n,n+1]$,
then we get the existence and uniqueness of solution $u(t)$  in $[0,+\infty).$
Similarly, the energy identity \eqref{energy identity} and mild form \eqref{mild form.eq} also hold.

\begin{proposition}\label{dissipation.2}
    For any $2 < p < \frac{2d}{d-2}, $ $\mu_0 >0$ and initial data satisfying $\| u_0 \|_{L^2} \leq R , $
    we can find appropriate intensity $\kappa$ and coefficients $\{\theta^{(n)}\}_{n \in \N}$ (only depending on parameters $d, p, R, \mu_{0}$) so that the solution $u(t)$ of SPDE \eqref{p-Laplace.4} satisfies
    \begin{equation}\label{dissipation.2.eq1}
        \E \|u(n) \|_{L^2}^2 \leq \frac{\E \|u(n-1) \|_{L^2}^2}{\Big(1+\mu_0 (p-2) 2^{n-1}\! \left(\E \|u(n-1) \|_{L^2}^2\right)^{\frac{p-2}{2}} \Big)^\frac{2}{p-2} } , \quad \forall\, n \in \Z_+ .
    \end{equation}
    Moreover, we can get the following dissipation estimate:
    \begin{equation}\label{dissipation.2.eq2}
        \E \|u(n) \|_{L^2}^2 \leq \frac{\|u_0 \|_{L^2}^2}{\big(1+\mu_0 (p-2) (2^n-1) \|u_0 \|_{L^2}^{p-2} \big)^\frac{2}{p-2} } , \quad \forall\, n \in \Z_+ .
    \end{equation}
    To be precise, the intensity $\kappa$ and coefficients $\{\theta^{(n)}\}_{n \in \N}$ satisfy
    \begin{equation}\label{condition.2}
        \begin{split}
            C_1(\kappa,1,R) & \leq \min \left\{ \frac{1}{1+\mu_0 (p-2) R^{p-2} },\frac{1}{3} \right\}, \\
            \sup_{n \in \N}C_2(\theta^{(n)},\mu_n) & \leq \min \left\{ \frac{1}{1+\mu_0 (p-2) R^{p-2} },\frac{1}{3} \right\} ,
        \end{split}
    \end{equation}
    where $\mu_n =2^{n} \mu_0 $ and $\theta^{(n)} \assign \{ \theta_{k,n} \}_{k \in \Z_0^d}.$
\end{proposition}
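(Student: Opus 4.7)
The plan is to apply the Case~1 / Case~2 dichotomy used in the proof of Proposition \ref{dissipation.1} on each unit interval $[n-1,n]$ with $t_0=1$ and with the intensity threshold $\mu_{n-1}=2^{n-1}\mu_0$, then to chain the resulting one-step estimates together via Lemma \ref{P2}. Concretely, I first fix $\kappa$ large enough so that $C_1(\kappa,1,R)$ is below the bound in \eqref{condition.2}; this is possible because $C_1(\kappa,t_0,R)\to 0$ as $\kappa\to\infty$ from its definition in \eqref{C_1,C_2}. Next, for each $n\in\N$, I choose $\theta^{(n)}\in\ell^2$ symmetric with $\|\theta^{(n)}\|_{\ell^2}=1$ and $\|\theta^{(n)}\|_\infty$ so small that the factor $\mu_n^{(2/p)(1-1/\beta)}\|\theta^{(n)}\|_\infty^{2/\beta}$ in $C_2(\theta^{(n)},\mu_n)$ verifies the second bound in \eqref{condition.2}. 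Since $\mu_n=2^n\mu_0$ grows exponentially, the required $\|\theta^{(n)}\|_\infty$ decays to zero; this is achievable by spreading $\theta^{(n)}$ over a larger and larger collection of Fourier modes (keeping the $\ell^2$-norm equal to one).

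With $\kappa$ and $\{\theta^{(n)}\}_{n\in\N}$ fixed in this way, I fix $n\in\Z_+$ and look at the interval $[n-1,n]$, on which the noise coefficients are $\theta^{(n-1)}$. I then split according to whether
\[
\E\int_{n-1}^{n}\|\nabla u(s)\|_{L^p}^{p}\,d s \;\gtreqless\; \mu_{n-1}\bigl(\E\|u(n-1)\|_{L^2}^2\bigr)^{p/2}.
\]
In the lower-bound (Case~1) case, the energy identity \eqref{energy identity.n} gives
$\E\|u(n)\|_{L^2}^2\le \E\|u(n-1)\|_{L^2}^2-2\mu_{n-1}(\E\|u(n-1)\|_{L^2}^2)^{p/2}$,
and Lemma \ref{P1} with $x=\mu_{n-1}(p-2)(\E\|u(n-1)\|_{L^2}^2)^{(p-2)/2}$ immediately yields \eqref{dissipation.2.eq1}. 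In the upper-bound (Case~2) case, I repeat the mild-formulation argument of Section \ref{sec:dissipation.1}: decompose $u(r)=v_1(r)+v_2(r)+v_3(r)$ as in \eqref{mild solution's component} on $[n-1,n]$, estimate $v_1$ by the heat semigroup contraction, $v_2$ exactly as in Lemma \ref{v_2} (which used $t_0\in(0,1]$ through $C_0$ only), and $v_3$ exactly as in Lemma \ref{v_3} with $\theta=\theta^{(n-1)}$ and $\mu=\mu_{n-1}$. The assembly in \eqref{case2.2} then gives
\[
\E\|u(n)\|_{L^2}^2\le \max\bigl\{C_1(\kappa,1,R),\,C_2(\theta^{(n-1)},\mu_{n-1})\bigr\}^{2/(p-2)}\E\|u(n-1)\|_{L^2}^2,
\]
and the first bound in \eqref{condition.2} together with $\E\|u(n-1)\|_{L^2}^2\le R^2$ (from the pathwise energy identity) turns this into \eqref{dissipation.2.eq1}.

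Having established \eqref{dissipation.2.eq1} for every $n\ge 1$, the estimate \eqref{dissipation.2.eq2} follows by induction on $n$ using Lemma \ref{P2}. Indeed, writing $y_n=\E\|u(n)\|_{L^2}^2$, the $n$-th step \eqref{dissipation.2.eq1} is of the form $y_n\le y_{n-1}/(1+\mu_0(p-2)2^{n-1}y_{n-1}^{(p-2)/2})^{2/(p-2)}$. Applying Lemma \ref{P2} to combine the bounds on the intervals $[0,n-1]$ and $[n-1,n]$, the cumulative constant becomes $\mu_0(p-2)(1+2+\cdots+2^{n-1})=\mu_0(p-2)(2^n-1)$, and one arrives at \eqref{dissipation.2.eq2} with base point $y_0=\|u_0\|_{L^2}^2$.

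The main obstacle is the simultaneous choice of $\kappa$ and $\{\theta^{(n)}\}$: since $\mu_n$ grows exponentially in $n$ while both constants in \eqref{condition.2} must remain uniformly below a fixed threshold, one has to verify that shrinking $\|\theta^{(n)}\|_\infty$ while preserving $\|\theta^{(n)}\|_{\ell^2}=1$ and symmetry is compatible with the $C_2$-bound; this is only possible because $(2/p)(1-1/\beta)<2/\beta$ for $p,\beta$ in the admissible range, so the growth of $\mu_n^{(2/p)(1-1/\beta)}$ can indeed be killed by the decay of $\|\theta^{(n)}\|_\infty^{2/\beta}$. Apart from this selection step, the proof is a routine interval-by-interval application of the machinery already developed in Section \ref{sec:dissipation.1}.
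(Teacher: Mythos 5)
Your overall architecture (interval-by-interval Case~1/Case~2 dichotomy with $t_0=1$, $\mu=\mu_{n-1}$, $\theta=\theta^{(n-1)}$, then chaining with Lemma \ref{P2}) matches the paper, and your Case~1 step and the selection of $\kappa$ and $\{\theta^{(n)}\}$ are fine. But there is a genuine gap in your Case~2 step. From \eqref{case2.2} and condition \eqref{condition.2} you only obtain
\begin{equation*}
\E\|u(n)\|_{L^2}^2 \;\leq\; \frac{\E\|u(n-1)\|_{L^2}^2}{\bigl(1+\mu_0(p-2)R^{p-2}\bigr)^{2/(p-2)}},
\end{equation*}
which, via $\E\|u(n-1)\|_{L^2}^2\leq R^2$, gives the one-step estimate with the constant $\mu_0$, \emph{not} with $\mu_0 2^{n-1}$. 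To get \eqref{dissipation.2.eq1} directly from the contraction factor in this way you would need $\max\{C_1,C_2\}\leq \bigl(1+\mu_0(p-2)2^{n-1}R^{p-2}\bigr)^{-1}$, a bound that tends to $0$ as $n\to\infty$ and would force $\kappa\to\infty$; condition \eqref{condition.2} deliberately does not require this. Consequently \eqref{dissipation.2.eq1} cannot be ``established for every $n\geq 1$'' independently and only afterwards fed into the chaining argument, as your last paragraph assumes.

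The missing idea is that \eqref{dissipation.2.eq1} and \eqref{dissipation.2.eq2} must be proved \emph{simultaneously} by induction on $n$; this is exactly what the $\tfrac13$ in \eqref{condition.2} is for. Assuming \eqref{dissipation.2.eq2} at step $N-1$, one has $\bigl(\E\|u(N-1)\|_{L^2}^2\bigr)^{-(p-2)/2}\geq \|u_0\|_{L^2}^{-(p-2)}+\mu_0(p-2)(2^{N-1}-1)$. The bound $\max\{C_1,C_2\}\leq \tfrac13$ turns \eqref{case2.2} into the additive statement
\begin{equation*}
\frac{1}{\bigl(\E\|u(N)\|_{L^2}^2\bigr)^{\frac{p-2}{2}}}-\frac{1}{\bigl(\E\|u(N-1)\|_{L^2}^2\bigr)^{\frac{p-2}{2}}}
\;\geq\; \frac{2}{\bigl(\E\|u(N-1)\|_{L^2}^2\bigr)^{\frac{p-2}{2}}}
\;\geq\; 2\mu_0(p-2)(2^{N-1}-1)\;\geq\;\mu_0(p-2)2^{N-1}
\end{equation*}
for $N\geq 2$, which is equivalent to \eqref{dissipation.2.eq1}; Lemma \ref{P2} then propagates \eqref{dissipation.2.eq2} to step $N$. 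In other words, the exponential gain at step $N$ is not produced by a stronger contraction on $[N-1,N]$, but by the decay already accumulated in the inductive hypothesis. (A minor side remark: your claimed need for $(2/p)(1-1/\beta)<2/\beta$ is spurious; for each fixed $n$ one simply spreads $\theta^{(n)}$ over enough modes to make $\|\theta^{(n)}\|_\infty$ small, with no relation between the exponents required.)
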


\begin{proof}
    We are going to prove Proposition \ref{dissipation.2} by induction. Because $\theta_{k}(t)$ is a step function, the estimates in Section \ref{sec:dissipation.1} also hold with $t_0=1,$ $ \mu=\mu_n$ and $\theta =\theta^{(n)}.$

    For $n=1$, by the estimate \eqref{dissipation.1.eq}, we have
    \begin{equation*}
        \E \| u(1) \|_{L^2}^2\leq \frac{\|u_0 \|_{L^2}^2}{ \big(1+\mu_0 (p-2) \|u_0 \|_{L^2}^{p-2} \big)^\frac{2}{p-2}} .
    \end{equation*}
    Thus, inequalities \eqref{dissipation.2.eq1} and \eqref{dissipation.2.eq2} hold for $n=1$.

    For $n=N \geq 2,$ we assume the inequality \eqref{dissipation.2.eq2} holds for $n=N-1,$ i.e.
    \begin{equation}\label{inductive assumption}
        \E \|u(N-1) \|_{L^2}^2 \leq \frac{\|u_0 \|_{L^2}^2}{\big(1+\mu_0 (p-2) (2^{N-1}-1) \|u_0 \|_{L^2}^{p-2} \big)^\frac{2}{p-2} }.
    \end{equation}
    If $\E \|u(N) \|_{L^2}^2 = 0$, then obviously the inequalities \eqref{dissipation.2.eq1} and \eqref{dissipation.2.eq2} hold with $n=N$, hence we assume $\E \|u(N) \|_{L^2}^2 > 0$. In this case, $\E \|u(N-1) \|_{L^2}^2 \geq \E \|u(N) \|_{L^2}^2 > 0$, thus we can rewrite \eqref{inductive assumption} as
    \begin{equation}\label{inductive assumption.1}
        \frac{1}{ \left( \E \| u(N-1) \|_{L^2}^2 \right)^{\frac{p-2}{2}} } \geq \frac{1}{ \| u_0 \|_{L^2}^{p-2} } +\mu_0 (p-2) (2^{N-1}-1).
    \end{equation}
    We only need to prove \eqref{dissipation.2.eq1} in Case 2 of the last section, with $\mu=\mu_{N-1}$ and $\theta=\theta^{(N-1)}$. By the estimate \eqref{case2.2}, we have
    \begin{equation*}
        \begin{split}
            \frac{1}{ \left( \E \| u(N) \|_{L^2}^2 \right)^{\frac{p-2}{2}} } & -\frac{1}{ \left( \E \| u(N-1) \|_{L^2}^2 \right)^{\frac{p-2}{2}} } \\
            \geq & \left(\frac{1}{ \max \{ C_1(\kappa,1,R),C_2(\theta^{(N-1)},\mu_{N-1}) \}}-1 \right) \frac{1}{ \left( \E \| u(N-1) \|_{L^2}^2 \right)^{\frac{p-2}{2}} } .
        \end{split}
    \end{equation*}
    Condition \eqref{condition.2} implies $\max \big\{ C_1(\kappa,1,R),C_2(\theta^{(N-1)},\mu_{N-1}) \big\} \leq \frac{1}{3}$, which, combined with inequality \eqref{inductive assumption.1}, gives us
    \begin{equation*}
        \begin{split}
            \frac{1}{ \left( \E \| u(N) \|_{L^2}^2 \right)^{\frac{p-2}{2}} } -\frac{1}{ \left( \E \| u(N-1) \|_{L^2}^2 \right)^{\frac{p-2}{2}} }
            & \geq  2 \bigg( \frac{1}{ \| u_0 \|_{L^2}^{p-2} } +\mu_0 (p-2) (2^{N-1}-1) \bigg) \\
            & \geq  \mu_0 (p-2) 2^{N-1} .
        \end{split}
    \end{equation*}
    It is equivalent to
    \[ \E \|u(N) \|_{L^2}^2 \leq \frac{\E \|u(N-1) \|_{L^2}^2}{\left(1+\mu_0 (p-2) 2^{N-1} (\E \|u(N-1) \|_{L^2}^2)^{\frac{p-2}{2}}\right)^\frac{2}{p-2}}, \]
    so inequality \eqref{dissipation.2.eq1} holds for $n=N.$ By inequality \eqref{inductive assumption} and Lemma \ref{P2}, we have
    \[\E \|u(N) \|_{L^2}^2 \leq \frac{\|u_0 \|_{L^2}^2}{\left(1+\mu_0 (p-2) (2^{N}-1) \|u_0 \|_{L^2}^{p-2}\right)^\frac{2}{p-2} } ,\]
    then inequality \eqref{dissipation.2.eq2} holds for $n=N.$ The induction holds.
\end{proof}

Now we will use Proposition \ref{dissipation.2} and Borel-Cantelli lemma to prove Theorem \ref{thm-pathwise}.

\begin{proof}[Proof of Theorem \ref{thm-pathwise}]
    Assume $u_0 \ne 0$; for any $n \geq 1,$ we define
    \begin{equation}\label{def.A_n}
            A_n  \assign \left\{ \omega \in \Omega : \sup_{t \in [n, n + 1]} \| u (t) \|_{L^2}^2 \geq \frac{\| u_0 \|^2_2}{\big( 1+ \mu_0  (p - 2) n \left\| u_0 \right\|_{L^2}^{p - 2} \! \big)^{\frac{2}{p - 2}}} >0 \right\} .
    \end{equation}
    Then by Chebyshev's inequality and Proposition \ref{dissipation.2}, it holds
    \begin{equation}\label{P A_n}
        \begin{split}
            \P(A_n) & \leq \frac{\big( 1 + \mu_0  (p - 2) n \left\| u_0 \right\|_{L^2}^{p - 2} \!\big)^{\frac{2}{p - 2}}}{\| u_0 \|^2_2} \E \bigg(\sup_{t \in [n,n+1]} \|u(t)\|_{L^2}^2 \bigg) \\
            & \leq \left(  \frac{1 + \mu_0 (p - 2) n \left\| u_0 \right\|_{L^2}^{p - 2} }{1+\mu_0 (p-2) (2^n-1) \|u_0 \|_{L^2}^{p-2}} \right)^{\frac{2}{p - 2}} .
        \end{split}
    \end{equation}
    So $\sum_{n \in \Z_{+}} \P(A_n) < + \infty ,$ and by Borel-Cantelli lemma, for $\P$-a.s. $\omega \in \Omega,$ there exists a big $N(\omega) >1$ such that for any $n>N(\omega),$ it holds
    \begin{equation*}
        \sup_{t \in [n, n + 1]} \| u (t) \|_{L^2}^2 \leq \frac{\| u_0 \|^2_2}{\big( 1 + \mu_0  (p - 2) n \left\| u_0 \right\|_{L^2}^{p - 2} \!\big)^{\frac{2}{p - 2}}} ;
    \end{equation*}
    If we set $\exp \{-\frac{1}{0_+}\}=0,$ then equivalently,
    \begin{equation}\label{dissipation3.eq.2}
        \sup_{t \in [n,n + 1]} \exp \left( -\frac{1}{\| u (t) \|_{L^2}^{p - 2}} \right) \leq e^{-\mu_0 (p-2)n} \exp \left( -\frac{1}{\| u_0 \|_{L^2}^{p - 2}} \right) .
    \end{equation}
    For $ 0 \leq n \leq N(\omega) ,$ by the decreasing property of $t \rightarrow \|u(t)\|_{L^2}$, we have
    \begin{equation*}
        \begin{split}
            \sup_{t \in [n,n + 1]} & \exp \left( -\frac{1}{\| u (t) \|_{L^2}^{p - 2}} \right) \leq \exp \left( -\frac{1}{\| u_0 \|_{L^2}^{p - 2}} \right) \\
            & \leq e^{\mu_0 (p-2) N(\omega)} e^{-\mu_0 (p-2) n} \exp \left( -\frac{1}{\| u_0 \|_{L^2}^{p - 2}} \right) .
        \end{split}
    \end{equation*}
    Thus, if we take $C(\omega) = e^{\mu_0 (p-2) (1+N(\omega))} ,$ it is easy to show
    \[\exp\left(- \frac1{\|u(t) \|_{L^2}^{p-2}} \right) \leq C(\omega) e^{-(p-2)\mu_0 t} \exp\left(- \frac1{\|u_0 \|_{L^2}^{p-2}} \right).\]
    And for any $t>\frac{\ln C(\omega)}{(p-2)\mu_0}$, we can get the decay of $\|u(t)\|_{L^2}$ as below:
    \[\|u(t) \|_{L^2} \leq \frac{\|u_0 \|_{L^2}}{\big(1+\left((p-2)\mu_0 t-\ln C(\omega) \right) \|u_0 \|_{L^2}^{p-2} \big)^{1/(p-2)}}.\]

    It remains to estimate the $q$-th moment of the random variable $C(\omega)$; to this end, we need to estimate the tail probability $\P(\{N(\omega) \geq k\})$.
    Note that $N(\omega)$ may be defined as the largest integer $n$ such that
      $$\sup_{t \in [n, n + 1]} \| u (t) \|_{L^2}^2 \geq \frac{\| u_0 \|^2_2}{\big( 1 + \mu_0  (p - 2) n \left\| u_0 \right\|_{L^2}^{p - 2} \! \big)^{\frac{2}{p - 2}}};$$
    hence
    \[\{\omega\in \Omega: N(\omega) \geq k\} = \bigcup_{n=k}^\infty A_n. \]
    Since $\|u_0\|_2>0,$ for any $\mu_0>0$, there exists $M=M(\mu_0, \|u_0\|_{L^2})$ such that
    \[ \frac{1 + \mu_0 (p - 2) n \left\| u_0 \right\|_{L^2}^{p - 2} }{1+\mu_0 (p-2) (2^n-1) \|u_0 \|_{L^2}^{p-2}} \leq \frac{2n}{2^n-1} \leq \bigg(\frac{1}{2} \bigg)^{\frac{n}{2}} ,\quad \forall\, n \geq M.\]
    By the estimate \eqref{P A_n}, for any $n \geq M,$ $\P(A_n) \leq \frac{1}{2^{\frac{n}{p-2}}},$ then for any $k \geq M$, it holds
    \[ \P(\{N(\omega) \geq k\}) \leq \sum_{n=k}^\infty \P(A_n) \leq \sum_{n=k}^\infty \left(\frac{1}{2}\right)^{\frac{n}{p-2}} = \frac{2^{\frac{1}{p-2}}}{2^{\frac{1}{p-2}}-1} \times \frac{1}{2^{\frac{k}{p-2}}} .\]
    As a result,
    \begin{equation}
        \begin{split}
            \E & e^{\mu_0 (p-2) q N(\omega)}  = \sum_{k=0}^\infty e^{\mu_0 (p-2) q k} \P(\{N(\omega) = k\}) \\
            & \leq \sum_{k=0}^{M} e^{\mu_0 (p-2) q k} + \frac{2^{\frac{1}{p-2}}}{2^{\frac{1}{p-2}}-1} \sum_{k=M+1}^\infty e^{ \big(\mu_0(p-2) q-\frac{\ln 2}{p-2} \big) k } .
        \end{split}
    \end{equation}
    When $q < \frac{\ln 2}{\mu_0 (p-2)^2} ,$ we get $ \E e^{\mu_0 (p-2) q N(\omega)} < +\infty $ and $C(\omega)$ has finite $q$-th moment.
\end{proof}

\begin{remark}\label{remark}
    In fact, in the same way, we can prove that for any positive coefficient polynomial $P(t)$, there exists a random constant $C(\omega)$ such that
    \begin{equation*}
        \exp \left(-\frac{1}{\| u(t) \|_{L^2}^{p-2}} \right) \leq C(\omega) e^{- \mu_0 (p-2) P(t)} \exp \left( -\frac{1}{\| u_0 \|_{L^2}^{p-2}} \right)
    \end{equation*}
    holds  $\P$-a.s. and for all $t \geq 0$.
\end{remark}

\appendix

\section{Variational solutions of stochastic $p$-Laplace equations with transport noise}\label{appendix}

In this part we consider general stochastic $p$-Laplace evolution equations of the form
\begin{equation}\label{stoch-p-laplace.1}
    d u = \Delta_p u \, d t + \sum_{n=0}^{+\infty} \xi_{n} \cdot \nabla u \circ d B_t^{n} ,
\end{equation}
where $\{B^n: n \in \N\}$ is a family of independent standard Brownian motions on some filtered probability space $(\Omega, \F, (\F_t), \P)$, and $\{\xi_{n}: n \in \N \}$ are some  divergence free vector
fields on $\T^d$ satisfying
\begin{equation}\label{condition}
    \eta \assign \sum_{n=0}^{+\infty} \| \xi_{k} \|_{L^{\frac{2p}{p-2}}(\T^d)}^{2} <+\infty .
\end{equation}
In It\^o form, the above equation \eqref{stoch-p-laplace.1} reads as
\begin{equation}\label{stoch-p-laplace.2}
    d u = \Delta_p u \, d t + S(u) \, d t + \sum_{n=0}^{+\infty} \xi_{n} \cdot \nabla u \, d B_t^{n} ,
\end{equation}
where the Stratonovich-It\^o correction term is now
\begin{equation}\label{Ito-Stratonovich}
    S(u)=\frac{1}{2} \sum_{n=0}^{+\infty} \xi_{n} \cdot \nabla \left( \xi_{n} \cdot \nabla u  \right) .
\end{equation}
We will prove the existence and uniqueness of its variational solution $u(t)$ and show that $u(t)$ satisfies the energy identity \eqref{energy identity}.

\begin{definition}\label{variational solution}
    A continuous $L^2(\T^d)$ valued $\mathcal{F}_t$-adapted process $u(t)$ is called a
    variational solution of \eqref{stoch-p-laplace.2}, if $u \in L^p (\Omega; L^p (0,  T ; W^{1, p})) \cap L^2 (\Omega ; C ([0,  T] ; L^2(\T^d))) $ and
    \begin{equation}\label{vraiational solution.eq}
        u (t) = u_0 + \int_0^t \Delta_p u \, \, d s +
        \int_0^t S(u) \, \, d s + \sum_{n=0}^{+\infty}
        \int_0^t \xi_{n} (x) \cdot \nabla u (s) \, d B_s^{n}
    \end{equation}
    holds in $W^{- 1,  \frac{p}{p - 1}} (\T^d)$ for any $t \in [0, T]$ and $\P$-a.s.
\end{definition}

Let $H=L^2(\T^d), V=W^{1, p}(\T^d), V^{\ast}=W^{-1, \frac{p}{p-1}}.$ Let $U$ be a separable Hilbert space,
and  $\left\{j_{n}, n \in \N \right\}$ be a complete orthonormal basis of $U$. $L_2 (U,  H)$ is the space of Hilbert-Schmidt operators from $U$ to $H$.
Define the operators $A: V \rightarrow V^{\ast}$ and $B: V \rightarrow L_2 (U,  H)$ as below:
\begin{equation}\label{operator}
    \begin{split}
        & A( u) \assign  \Delta_p u +\frac{1}{2} \sum_{n =0}^{+\infty}
        \xi_{n} \cdot \nabla \left( \xi_{n} \cdot \nabla u  \right)  , \quad \forall\, u \in V ,\\
        & B (u) (j_{n}) \assign  \xi_{n} \cdot \nabla u , \quad \forall\, u \in V, \forall\, n \in \N .
    \end{split}
\end{equation}

In order to prove the existence of variational solutions to \eqref{stoch-p-laplace.2}, we need to verify the conditions
in \cite[Theorem 4.2.4]{liu15}, listed below:
\begin{enumerate}
    \item[(H1)] (Hemicontinuity) For all $u, v, w \in V,$ the map
    \begin{equation*}
        \R \ni \lambda \to {}_{V^{\ast}}\langle A(u+\lambda v ),  w\rangle_V
    \end{equation*}
    is continuous.
    \item [(H2)] (Weak monotonicity) For all $u, v\in V,$
    \begin{equation*}
        2 {}_{V^{\ast}}\langle A( u)-A( v), u-v \rangle_V +\|B(u)-B(v)\|_{L_2(U, H)}^2 \leq 0 .
    \end{equation*}
    \item [(H3)] (Coercivity)  There exists $c_1>0$, such that for all $u \in V, t \in [0, T],$
    \begin{equation*}
        2 {}_{V^\ast}\langle A( u), u \rangle_V + \|B(u)\|_{L_2(U, V)}^2 \leq -c_1  \| u \|_V^2 .
    \end{equation*}
    \item [(H4)] (Boundedness) There exists a constant $c_2 >0$ such that for all $u \in V, t \in [0, T],$
    \begin{equation*}
        \|A(u) \|_{V^{\ast}} \leq c_2 \big(1+\| u\|_V^{p-1} \big).
    \end{equation*}
\end{enumerate}

\begin{theorem}\label{well-posed}
    If $u_0 \in L^2 \left( \Omega, \mathcal{F}_0, \P;L({\T^d}) \right) $and $\{\xi_n\}_{n \geq 1}$ satisfies \eqref{condition},
    then there exist a unique solution $\{ u(t) \}_{t \in [0,T]}$ to \eqref{stoch-p-laplace.2} in the sense of Definition \ref{variational solution}.
    Furthermore, the energy identity
    \begin{equation*}
        \| u (t) \|_{L^2}^2 = \| u_0 \|_{L^2}^2 - 2 \int_0^t \| \nabla u \|_{L^p}^p \, d s
    \end{equation*}
    holds $\P$-a.s. for all $t \geq 0$.
\end{theorem}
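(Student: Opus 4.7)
The plan is to invoke the variational existence and uniqueness theory of Liu--R\"ockner \cite[Theorem 4.2.4]{liu15} by verifying the four structural conditions (H1)--(H4) explicitly listed above for the operators $A,B$ defined in \eqref{operator}, with the Gelfand triple $V=W^{1,p}(\T^d) \hookrightarrow H=L^2(\T^d) \hookrightarrow V^\ast=W^{-1,p/(p-1)}(\T^d)$. The crucial algebraic identity driving everything is the following Stratonovich--It\^o cancellation: since each $\xi_n$ is divergence free, integration by parts gives
\[
\langle \xi_n \cdot \nabla(\xi_n \cdot \nabla u), u \rangle = -\langle \xi_n \cdot \nabla u, \xi_n \cdot \nabla u \rangle = -\|\xi_n \cdot \nabla u\|_{L^2}^2,
\]
so that $2\langle S(u),u\rangle + \|B(u)\|_{L_2(U,H)}^2 = 0$. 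This cancellation will take care of all terms coming from the transport noise, leaving only the dissipative contribution of $\Delta_p$.

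For hemicontinuity (H1), the transport correction $S$ is linear in $u$ and the scalar map $\lambda \mapsto \langle \Delta_p(u+\lambda v),w\rangle$ is continuous by standard monotone operator theory for the $p$-Laplacian, so (H1) holds. For weak monotonicity (H2), combining the polarized Stratonovich--It\^o cancellation applied to $u-v$ with the first inequality of Proposition \ref{pro.p-laplace}, one obtains $2\langle A(u)-A(v),u-v\rangle + \|B(u)-B(v)\|_{L_2(U,H)}^2 = 2\langle \Delta_p u - \Delta_p v, u-v\rangle \leq 0$. For coercivity (H3), the same cancellation gives $2\langle A(u),u\rangle + \|B(u)\|_{L_2(U,H)}^2 = 2\langle \Delta_p u, u\rangle \leq -2c\|u\|_V^p$ by the second inequality of Proposition \ref{pro.p-laplace}, which yields (H3) since $\|u\|_V^2 \leq C(1+\|u\|_V^p)$ for $p>2$.

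The only nonobvious verification is boundedness (H4). For the $p$-Laplace part, Proposition \ref{pro.p-laplace} directly gives $\|\Delta_p u\|_{V^\ast}\leq c\|u\|_V^{p-1}$. For $S(u)$, I would estimate by duality: for any $\phi\in V$,
\[
|\langle S(u),\phi\rangle| = \tfrac12 \Big| \sum_{n=0}^{+\infty} \langle \xi_n \cdot \nabla u, \xi_n \cdot \nabla \phi\rangle \Big| \leq \tfrac12 \sum_{n=0}^{+\infty} \|\xi_n\|_{L^{2p/(p-2)}}^2 \|\nabla u\|_{L^p} \|\nabla \phi\|_{L^p}
\]
by H\"older's inequality with exponents $(p, 2p/(p-2), p)$. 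Using \eqref{condition}, this gives $\|S(u)\|_{V^\ast}\leq \tfrac{\eta}{2}\|u\|_V$, and since $p>2$ we absorb this into $C(1+\|u\|_V^{p-1})$. For $B$, the same H\"older bound yields $\|B(u)\|_{L_2(U,H)}^2 \leq \eta \|\nabla u\|_{L^p}^2$, which also fits the growth required by the Liu--R\"ockner framework.

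With (H1)--(H4) verified, \cite[Theorem 4.2.4]{liu15} produces a unique variational solution $u$ in the class required by Definition \ref{variational solution}. The energy identity then follows by applying the It\^o formula from \cite[Theorem 4.2.5]{liu15} to $\|u(t)\|_{L^2}^2$: the quadratic variation term $\|B(u)\|_{L_2(U,H)}^2$ exactly cancels $2\langle S(u),u\rangle$ as noted above, leaving $d\|u\|_{L^2}^2 = 2\langle \Delta_p u, u\rangle\,dt + dM_t$ with martingale part $M_t$; localization plus $\langle \Delta_p u,u\rangle \leq -c\|u\|_{W^{1,p}}^p$ ensures the local martingale is in fact a true martingale with zero contribution after taking the pathwise identity (actually $M_t$ is identically zero because the same cancellation applied to the cross terms in the It\^o formula eliminates the stochastic integrals against $B^n$, thanks to divergence-freeness of $\xi_n$). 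I expect the main obstacle to be the careful bookkeeping needed to justify exchanging the infinite sum with the integration by parts in (H2), (H3), and in the It\^o formula, which is handled via the summability \eqref{condition} together with a standard truncation and approximation argument on the $\xi_n$'s.
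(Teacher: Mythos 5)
Your proposal is correct and follows essentially the same route as the paper: verifying (H1)--(H4) for the operators $A,B$ via the divergence-free cancellation $2\langle S(u),u\rangle+\|B(u)\|_{L_2(U,H)}^2=0$, invoking \cite[Theorem 4.2.4]{liu15} for existence and uniqueness, and deducing the energy identity from \cite[Theorem 4.2.5]{liu15} with the noise contributions vanishing by divergence-freeness. The details (the duality/H\"older estimate for $S(u)$ and $B(u)$ using \eqref{condition}, the dominated-convergence argument for hemicontinuity of the $p$-Laplace part) match the paper's proof.
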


\begin{proof}
    Since $\xi$ is spatially divergence free, for all $ u, v \in V,$ we have
    \begin{equation}\label{operator A}
        {}_{V^{\ast}} \langle A (u),  v \rangle_V = - \int_{\T^d} |
        \nabla u |^{p - 2} \nabla u \cdot \nabla v\,  d x - \frac{1}{2} \sum_{n=0}^{+\infty}
        \int_{\T^d} (\xi_{n} \cdot \nabla u) (\xi_{n} \cdot \nabla v)\,  d x .
    \end{equation}
    By H\"older inequality,
    \begin{equation*}
       \begin{split}
        |{}_{V^{\ast}} \langle A (u),  v \rangle_V | & \leq \| \nabla u \|_{L^p}^{p - 1} \| \nabla v \|_{L^p}
        + \frac{1}{2} \| \nabla u \|_{L^p} \| \nabla v \|_{L^p}  \sum_{n=0}^{+ \infty}
        \left( \int_{\T^d}| \xi_{n} |^{\frac{2 p}{p - 2}}\,  d x \right)^{\frac{p - 2}{p}} \\
        & = \| \nabla u \|_{L^p}^{p - 1} \| \nabla v \|_{L^p} + \frac{\eta}{2} \| \nabla u \|_{L^p} \| \nabla v \|_{L^p} .
       \end{split}
    \end{equation*}
    As a result, the operator $A: V \rightarrow V^{\ast} $ is well-defined, and for any $u \in V$
    \[ \| A (u) \|_{V^{\ast}} \leq \| u \|_{W^{1,  p}}^{p - 1} + \eta \| u
     \|_{W^{1,  p}} \leq \eta + (1 + \eta) \| u \|_{W^{1,  p}}^{p - 1} ,\]
    so condition (H4) is satisfied.

    By the equality \eqref{operator A}, to prove that operator $A$ satisfies condition (H1), we only need to show for fixed $u, v, w \in V, $
    for $ \lambda \in \R,  |\lambda|<1,$
    \begin{equation*}
        \begin{split}
            \lim_{\lambda \rightarrow 0} & \int_{\T^d} \left( |\nabla (u+\lambda v)|^{p-2} \nabla (u+\lambda v) \cdot \nabla w
            -|\nabla u|^{p-2} \nabla u \cdot \nabla w \right) \, d x\\
            + & \frac{\lambda}{2} \sum_{n=0}^{+\infty} \int_{\T^d} \left( \xi_{n} \cdot \nabla v \right)
            (\xi_{n} \cdot \nabla w) \, d x =0 .
        \end{split}
    \end{equation*}
    By H\"older inequality and the condition \eqref{condition}, the second term tends to zero as $\lambda \rightarrow 0.$
    Since obviously, the integrands in the first term converge to zero as $\lambda \rightarrow 0, \,  d x$ -a.s., we only have to find a dominating
    function to apply Lebesgue's dominated convergence theorem. But

    \begin{equation*}
        |\nabla (u+\lambda v)|^{p-1} \cdot |\nabla w | \leq 2^{p-2} \left(|\nabla u|^{p-1}+ \nabla  v|^{p-1} \right) \, |\nabla w | ,
    \end{equation*}
    so the first term tends to zero as $\lambda \rightarrow 0.$ Condition (H1) is satisfied. Next
    \begin{equation}\label{operator B}
        \| B (u) \|_{L_2 (U,  H)}^2  : =  \sum_{n=0}^{+\infty} \int_{\T^d} (B (u) (j_{n}))^2\,  d x
        = \sum_{n=0}^{+\infty} \int_{\T^d} (\xi_{n} \cdot \nabla u)^2\,  d x , \quad \forall \, u \in V.
    \end{equation}
    By H\"older inequality and condition \eqref{condition},  the operator $B$ is well-defined.

    By \eqref{operator A}, \eqref{operator B}, and Proposition \ref{pro.p-laplace},  for fixed $u, v \in V, $ we have
    \begin{equation*}
        \begin{split}
            2{}_{V^{\ast}} \langle A (u) - A (v),  u - v \rangle_V + \| B (u) - B (v) \|_{L_2 (U,  H)}^2
            & = 2{}_{V^{\ast}} \langle \Delta_p u - \Delta_p v,  u - v \rangle_V \leq 0 ; \\
            2{}_{V^{\ast}} \langle A (v),  v \rangle_V + \| B (v) \|_{L_2 (U,  H)}^2
            &={}_{V^{\ast}} \langle \Delta_p v,  v \rangle_V \leq - c_2 \| v \|_{W^{1, p}}^p ,
        \end{split}
    \end{equation*}
    hence the conditions (H2) and (H3) are satisfied.

    The existence and uniqueness of \eqref{stoch-p-laplace.2} are consequence of \cite[Theorem 4.2.4]{liu15}. Because of $\E \|u(t)\|_{L^2}^2 \leq \E \|u_0\|_{L^2}^2 <+\infty,$
    applying \cite[Theorem 4.2.5]{liu15},  we know solution $u(t)$ is continuous in $L^{2}(\T^d), $
    and $\P$-a.s., for all $t \in [0, T]$, the energy identity
    \[ \| u (t) \|_{L^2}^2 = \| u_0 \|_{L^2}^2 - 2 \int_0^t \| \nabla u\|_{L^p}^p \, \, d s   \]
    holds. By the divergence free property of $\xi_{n}$,  the noise part vanishes in energy type computations. So the above energy estimate is similar to the deterministic system.
\end{proof}

\bigskip

\noindent \textbf{Acknowledgements.} The second author would like to thank the financial supports of the National Key R\&D Program of China (No. 2020YFA0712700), the National Natural Science Foundation of China (Nos. 11931004, 12090014), and the Youth Innovation Promotion Association, CAS (Y2021002).


\begin{thebibliography}{99} \setlength{\itemsep}{-1pt}

\bibitem{BCZ17} J. Bedrossian, M. Coti Zelati, Enhanced dissipation, hypoellipticity, and anomalous small noise inviscid limits in shear flows. \emph{Arch. Ration. Mech. Anal.} \textbf{224} (2017), no. 3, 1161--1204.

\bibitem{BBPS21} J. Bedrossian, A. Blumenthal, S. Punshon-Smith, Almost-sure enhanced dissipation and uniform-in-diffusivity exponential mixing for advection-diffusion by stochastic Navier-Stokes. \emph{Probab. Theory Related Fields} \textbf{179} (2021), no. 3-4, 777--834.

\bibitem{BBPS22} J. Bedrossian, A. Blumenthal, S. Punshon-Smith. Almost-sure exponential mixing of passive scalars by the stochastic Navier-Stokes equations. \emph{Ann. Probab.} \textbf{50} (2022), no. 1, 241--303.

\bibitem{BHN05} H. Berestycki, F. Hamel, N. Nadirashvili, Elliptic eigenvalue problems with large drift and applications to nonlinear propagation phenomena. \emph{Comm. Math. Phys.} \textbf{253} (2005), no. 2, 451--480.

\bibitem{BFM16} Z. Brze\'zniak, F. Flandoli, M. Maurelli, Existence and uniqueness for stochastic 2D Euler flows with bounded vorticity. \emph{Arch. Ration. Mech. Anal.} \textbf{221} (2016), no. 1, 107--142.

\bibitem{BM19} Z. Brze\'zniak, M. Maurelli, Existence for stochastic 2D Euler equations with positive $H^{-1}$ vorticity, arXiv:1906.11523v2.

\bibitem{CKRZ08} P. Constantin, A. Kiselev, L. Ryzhik, A. Zlato\v{s}, Diffusion and mixing in fluid flow. \emph{Ann. of Math. (2)} \textbf{168} (2008), no. 2, 643--674.

\bibitem{CZDE20} M. Coti Zelati, M. G. Delgadino, T. M. Elgindi, On the relation between enhanced dissipation timescales and mixing rates. \emph{Commun. Pure Appl. Math.} \textbf{73} (2020), no. 6, 1205--1244.

\bibitem{FHX21} Yu Feng, Bingyang Hu, Xiaoqian Xu, Dissipation enhancement by mixing for evolution $p$--Laplacian advection equations. arXiv:2104.12578v1.

\bibitem{FI19} Y. Feng, G. Iyer, Dissipation enhancement by mixing. \emph{Nonlinearity} \textbf{32} (2019), no. 5, 1810--1851.

\bibitem{Flandoli10} F. Flandoli. Random perturbation of PDEs and fluid dynamic models. Lectures from the 40th Probability Summer School held in Saint-Flour, 2010. Lecture Notes in Mathematics, 2015. \'Ecole d'\'Et\'e de Probabilit\'es de Saint-Flour. \emph{Springer, Heidelberg}, 2011.

\bibitem{FGL21a} F. Flandoli, L. Galeati, D. Luo. Scaling limit of stochastic 2D Euler equations with transport noises to the deterministic Navier-Stokes equations. \emph{J. Evol. Equ.} \textbf{21} (2021), no. 1, 567--600.

\bibitem{FGL21b} F. Flandoli, L. Galeati, D. Luo.  Delayed blow-up by transport noise. \emph{Comm. Partial Differential Equations} \textbf{46} (2021), no. 9, 1757--1788.

\bibitem{FGL21c} F. Flandoli, L. Galeati, D. Luo. Quantitative convergence rates for scaling limit of SPDEs with transport noise. arXiv:2104.01740v2.

\bibitem{FL19} F. Flandoli, D. Luo, $\rho$-white noise solution to 2D stochastic Euler equations. \emph{Probab. Theory Related Fields} \textbf{175} (2019), no. 3-4, 783--832.

\bibitem{FL20} F. Flandoli, D. Luo, Convergence of transport noise to Ornstein-Uhlenbeck for 2D Euler equations under the enstrophy measure. \emph{Ann. Probab.} \textbf{48} (2020), no. 1, 264--295.

\bibitem{FL21} F. Flandoli, D. Luo, High mode transport noise improves vorticity blow-up control in 3D Navier-Stokes equations. \emph{Probab. Theory Related Fields} \textbf{180} (2021), no. 1-2, 309--363.

\bibitem{Galeati20} L. Galeati. On the convergence of stochastic transport equations to a deterministic parabolic one. \emph{Stoch. Partial Differ. Equ. Anal. Comput.} \textbf{8} (2020), no. 4, 833--868.

\bibitem{GY21} B. Gess, I. Yaroslavtsev, Stabilization by transport noise and enhanced dissipation in the Kraichnan model. arXiv:2104.03949.

\bibitem{IXZ21} G. Iyer, X. Xu, A. Zlato\v{s}, Convection-induced singularity suppression in the Keller-Segel and other non-linear PDEs. \emph{Trans. Amer. Math. Soc.} \textbf{374} (2021), no. 9, 6039--6058.

\bibitem{LC22} O. Lang, D. Crisan, Well-posedness for a stochastic 2D Euler equation with transport noise. \emph{Stoch PDE: Anal Comp} (2022). https://doi.org/10.1007/s40072-021-00233-7.

\bibitem{liu15} W. Liu, M. R\"ockner. Stochastic partial differential equations: an introduction. Cham: Springer, 2015.

\bibitem{Lun95} A. Lunardi, \emph{Analytic Semigroups and Optimal Regularity in Parabolic Problems}, Progr. Nonlinear Differential Equations Appl. 16, Birkh\"auser Verlag, Basel, 1995.

\bibitem{Luo21} D. Luo, Convergence of stochastic 2D inviscid Boussinesq equations with transport noise to a deterministic viscous system. \emph{Nonlinearity} \textbf{34} (2021), no. 12, 8311--8330.

\bibitem{Zla10} A. Zlato\v{s}, Diffusion in fluid flow: dissipation enhancement by flows in 2D. \emph{Commun. Partial Differ. Equ.} \textbf{35} (2010), no. 3, 496--534.
\end{thebibliography}
\end{document}